\def\CONST{\operatorname{const}}
\def\DEF{\stackrel{\rm def}{=}}
\def\({\left(}			\def\){\DOTSX\right)}
\def\lv{\left|}			\def\rv{\DOTSX\right|}
\def\lb{\left\{}		
\def\<{\left\langle}	\def\>{\right\rangle}
\def\NN{\mathbb{N}}
\def\RR{\mathbb{R}}
\newcommand{\bU}{\mathbf{U}}
\newcommand{\bV}{\mathbf{V}}
\newcommand{\bX}{\mathbf{X}}
\newcommand{\bZ}{\mathbf{Z}}
\newcommand{\bA}{\mathbf{A}}
\newcommand{\bB}{\mathbf{B}}
\newcommand{\balpha}{\bA}	
\newcommand{\bbeta}{\bB}	
\newcommand{\bOmega}{\mathbf{\Omega}}
\newcommand{\cO}{\mathcal{O}}
\begin{document}

\parindent=0pt\parskip=8pt
\linespread{1.3}

\numberwithin{equation}{section}

\newtheorem{theorem}{Theorem}[section]
\newtheorem{lemma}[theorem]{Lemma}
\newtheorem{corollary}[theorem]{Corollary}
\newtheorem{proposition}[theorem]{Proposition}
\newtheorem{example}[theorem]{Example}

\newtheorem*{acknowledgments}{Acknowledgments}

\theoremstyle{definition}
\newtheorem{note}[theorem]{Note}

\title[A family of nonlinear difference equations]
{A family of nonlinear difference equations: \\ 
existence, uniqueness, and asymptotic behavior of positive solutions}


\author[S.~M.~Alsulami]{Saud M. Alsulami}
\address{SMA: KAU, Jeddah, Saudi Arabia}
\email{alsulami@kau.edu.sa}

\author[P.~Nevai]{Paul Nevai}
\address{PN: KAU, Jeddah, Saudi Arabia, \texttt{and} Upper Arlington (Columbus), Ohio, USA}
\email{paul@nevai.us}

\author[J.~Szabados]{J\'ozsef Szabados}
\address{JSZ: Alfr\'ed R\'enyi Institute of Mathematics, Hungarian Academy of Sciences, Budapest, Hungary}
\email{szabados.jozsef@renyi.mta.hu}

\author[W.~Van Assche]{Walter Van Assche}
\address{WVA: KU Leuven, Belgium}
\email{walter.vanassche@wis.kuleuven.be}

\thanks{
The research of Saud M. Alsulami and Paul Nevai was supported by KAU grant No. 20-130/1433 HiCi.
The research of Walter Van Assche was supported by KU Leuven research grant
OT/12/073 and FWO research grant G.0934.13} 

\begin{abstract}

We study solutions $\(x_n\)_{n\in\NN}$ of nonhomogeneous nonlinear
second order difference equations of the type
\begin{align} \notag 
	\ell_n = 
		x_n &\( \sigma_{n,1} \, x_{n+1} + \sigma_{n,0} \, x_n + \sigma_{n,-1} \, x_{n-1} \) + \kappa_n \, x_n ,
		\quad n\in\NN , 
	\\
	&\mbox{with given initial data } 
		\{ x_0 \in \RR \;\; {\rm{\&}} \;\; x_1 \in \RR^+ \} \notag
\end{align}
where
\begin{equation} \notag 
	\(\ell_n\)_{n\in\NN} \in \RR^+
	\; {\rm{\&}} \;
	\(\sigma_{n,0}\)_{n\in\NN} \in\RR^+
	\; {\rm{\&}} \;
	\(\kappa_n\)_{n\in\NN} \in\RR ,
\end{equation}
and the left and right $\sigma$-coefficients satisfy either
\begin{equation} \notag 
	\(\sigma_{n,1}\)_{n\in\NN} \in\RR^+
	\; {\rm{\&}} \;
	\(\sigma_{n,-1}\)_{n\in\NN} \in\RR^+
\end{equation}
or
\begin{equation} \notag 
	\(\sigma_{n,1}\)_{n\in\NN} \in\RR^+_0
	\; {\rm{\&}} \;
	\(\sigma_{n,-1}\)_{n\in\NN} \in\RR^+_0  .
\end{equation}
Depending on one's standpoint, such equations originate either from
orthogonal polynomials associated with certain Shohat-Freud-type
exponential weight functions or from Painlev\'e's discrete equation \#1,
that is, $\textup{d-P}_{\mbox{\small \Romanbar{I}}}$.

\end{abstract}

\dedicatory{Dedicated to Dick Askey}

\subjclass[2010]{39A22, 65Q10, 65Q30}

\keywords{nonhomogeneous nonlinear second order difference equations,
Shohat-Freud-type exponential weight functions, Painlev\'e's discrete
equation \#1, existence of solutions, unicity of solutions, asymptotic
behavior}


\date{submission: November 30, 2013; revision: March 25, 2014;
accepted: April 14, 2014; available online: MMM DD, 2014}

\maketitle

\section{Preliminaries}
\label{sec_prelim}

Since the authors come from different cultures using different
mathematical notation, we need to fix some of it right now in order to
avoid subsequent misunderstanding.

The set of natural numbers $\NN$ consists of all strictly positive
integers. Furthermore, $\RR^+ \DEF \{x\in\RR: x > 0\}$ and $\RR^+_0 \DEF
\{x\in\RR: x \ge 0\}$.


\section{Introduction}
\label{sec_intro}

This section will explain how the unlikely pair of JSZ and WVA became
involved in this research via PN's manipulations. We justify its unusual
length compared to the rest of the paper by the necessity of giving a proper
historical perspective that will also serve as introduction for the
subsequent papers that we plan to publish on nonlinear difference equations.

It was G\'eza Freud who brought the attention of the approximation theory and
orthogonal polynomial communities to exponential weight functions with his
extensive body of work in the 1970s that was suddenly interrupted by his
untimely death in 1979 at the youthful age of 57 years.\footnote{This
statement is not entirely accurate; e.g., Mkhitar Djrbashian (aka
Dzhrbashjan \& Jerbashian) has a large body of work that is not that different
from some of Freud's work but its international impact was negligible.
In addition, exponential weights have long been of definite interest in areas such
as the moment problem.} In particular, Freud solved two special and, to some
extent, simple cases of his \emph{Freud conjectures} that, even today, are of
extraordinary interest despite having been overshadowed by the incomparably
deeper pathbreaking achievements by so many of us such as Alphonse Magnus,
Evguenii A. Rakhmanov, Andrei A. Gonchar, Hrushikesh N. Mhaskar, Edward B.
Saff, Doron S. Lubinsky, Vilmos Totik, and Guillermo L\'opez Lagomasino, in
some kind of a chronological order.

The two special cases above refer to the asymptotic behavior of the
recurrence coefficients in the three-term recurrence for the orthogonal
polynomials associated with the weight functions $|x|^\rho\exp(-x^4)$
and $|x|^\rho\exp(-x^6)$ on $\RR$ with $\rho>-1$, see \cite{freud1976}.
In particular, \cite[(23, p.~5]{freud1976} is the almost-birthplace of
the equation
\begin{equation} \label{eq_xto4withrho}
	n+ \frac {1-(-1)^n}{2} \rho = 4 a_n^2 \(a_{n+1}^2 + a_n^2 + a_{n-1}^2\),
	\qquad a_0=0, \quad n\in\NN ,
\end{equation}
where $\(a_n\)_{n\in\NN}$ are the recurrence coefficients for the
orthogonal polynomials associated with $|x|^\rho\exp(-x^4)$. We wrote
``almost-birthplace'', because, as it was discovered in 1983 by Dick
Askey, see \cite[p.~285]{nevai1983}, Shohat in 1939 could have found
(\ref{eq_xto4withrho}) except that he was only interested in the weight
function $\exp(-x^4)$, that is, when $\rho=0$, see \cite[(39),
p.~407]{shohat1939}. Even if Shohat found or could have found the
equation, he did nothing with it and neither did Freud except that Freud
used a clever $\liminf$--$\limsup$ argument, we call it the \emph{Freud
Kunstgriff\/}, to find the asymptotic behavior of $\(a_n\)$'s in
(\ref{eq_xto4withrho}), see \cite[part~(b),
p.~5]{freud1976}.\footnote{It was subsequently successfully adapted by
several authors, see, \cite[Theorem~1, p.~266]{nevai1983},
\cite[Theorem~2.2(b), pp.~210--211]{hajela1987}, and
\cite[p.~695]{vanassche2007}.} Let us emphasize that for both Shohat and
Freud the $\(a_n\)$'s were recurrence coefficients for the orthogonal
polynomials although the equation itself could have been viewed
independently of  orthogonal polynomials with the stipulation that the
$\(a_n\)$'s are positive.\footnote{The $\(a_n\)$'s appear squared in
(\ref{eq_xto4withrho}) so one could also simply require that they be
real and nonzero.}

PN's 1983 paper \cite{nevai1983} was the first one to subject
\begin{equation} \label{eq_xto4}
	n = 4 a_n^2 \(a_{n+1}^2 + a_n^2 + a_{n-1}^2\),
	\qquad a_0=0, \quad n\in\NN ,
\end{equation}
to a systematic analysis and it was the almost-birthplace of the
theorem that is the starting point of the current paper, see
\cite[Theorem~3, p.~268]{nevai1983}.

\begin{theorem}\label{thm_nevai1983}

The equation
\begin{equation} \label{eq_x_n}
	n = x_n \(x_{n+1} + x_n + x_{n-1}\),
	\qquad x_0=0, \quad n\in\NN ,
\end{equation}
has a unique positive\footnote{In \cite[Theorem~3, p.~268]{nevai1983}
the word ``nonnegative'' is used erroneously, PN's maxima culpa.}
solution $\(x_n\)_{n\in\NN}$, and this solution is obtained by setting
\begin{equation} \label{eq_x_1} \notag
	x_1 =
	\frac
	{\int_\RR x^2 \exp \( - x^4 \) }
	{\int_\RR \exp \( - x^4 \) }
	\, dx
	=
	2 \, \frac {\Gamma\(3/4\)} {\Gamma\(1/4\)} \, .
\end{equation}

\end{theorem}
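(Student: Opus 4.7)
I would construct a positive solution from orthogonal polynomial theory. Let $(p_n)$ be the monic orthogonal polynomials for the symmetric Freud weight $w(x) = \exp(-x^4)$ on $\RR$; by symmetry the three-term recurrence reads $x p_n = p_{n+1} + a_n^2 p_{n-1}$ with $a_0 \DEF 0$ and $a_n > 0$. Shohat's integration-by-parts argument (based on $w'/w = -4 x^3$, see~\cite{shohat1939}) yields $n = 4 a_n^2 (a_{n+1}^2 + a_n^2 + a_{n-1}^2)$, so that $x_n \DEF 2 a_n^2$ solves (\ref{eq_x_n}). The initial value follows from $a_1^2 = \int_\RR x^2 w(x)\,dx \big/ \int_\RR w(x)\,dx$: the substitution $u = x^4$ reduces the quotient to $\Gamma(3/4) / \Gamma(1/4)$, yielding $x_1 = 2 \Gamma(3/4) / \Gamma(1/4)$.

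\textbf{Uniqueness, setup.} Given two positive solutions $(x_n)$ and $(y_n)$ with $x_0 = y_0 = 0$, I set $w_n \DEF x_n - y_n$. Rewriting (\ref{eq_x_n}) as $x_{n+1} = n/x_n - x_n - x_{n-1}$ and subtracting the two instances, one obtains the \emph{exact} linear second-order recurrence
\begin{equation*}
	w_{n+1} + A_n \, w_n + w_{n-1} = 0, \qquad A_n \DEF 1 + \frac{n}{x_n y_n},
\end{equation*}
with $w_0 = 0$. The crude a-priori bound $x_n^2 \leq x_n(x_{n+1} + x_n + x_{n-1}) = n$, valid by positivity of the neighbours, gives $x_n, y_n \leq \sqrt n$ and thus $A_n \geq 2$.

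\textbf{Growth of $|w_n|$ and conclusion.} An induction on $n$ then shows that the $w_n$'s alternate in sign with $|w_n|$ strictly increasing: whenever $|w_n| > |w_{n-1}|$ with opposite signs, the term $-A_n w_n$ dominates $-w_{n-1}$ in $w_{n+1}$, forcing $\osign(w_{n+1}) = -\osign(w_n)$ and $|w_{n+1}| = A_n |w_n| - |w_{n-1}| \geq (A_n - 1) |w_n|$. Combined with the Freud-Kunstgriff asymptotic $x_n, y_n \sim \sqrt{n/3}$ (derived from (\ref{eq_x_n}) by comparing $\limsup$ and $\liminf$ of $x_n/\sqrt{n/3}$; cf.~\cite{nevai1983}), one has $A_n \to 4$, and the ratio $|w_{n+1}|/|w_n|$ tends to $2 + \sqrt 3 > 1$. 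This geometric growth contradicts $|w_n| \leq x_n + y_n = O(\sqrt n)$ unless $w_1 = 0$, in which case the recurrence forces $w_n \equiv 0$.

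\textbf{Main obstacle.} The delicate point is precisely the quantitative growth step: the crude bound $A_n \geq 2$ by itself only gives $|w_{n+1}| \geq |w_n|$, not the geometric growth needed to overwhelm $O(\sqrt n)$. One must therefore input either the full Freud asymptotic $x_n \sim \sqrt{n/3}$ or extract a uniform gap $A_n \geq 2 + \varepsilon$ directly from (\ref{eq_x_n}); the Freud Kunstgriff is the standard and cleanest route.
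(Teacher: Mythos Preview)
Your argument is correct (the one loose end is the claim $|w_{n+1}|/|w_n|\to 2+\sqrt3$, which you do not justify; but it is enough to note that once $A_n\ge 3$ and $|w_n|\ge|w_{n-1}|$ you get $|w_{n+1}|\ge 2|w_n|$, which already contradicts $|w_n|=O(\sqrt n)$). However, both halves differ markedly from the paper's own route.

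\emph{Existence.} You invoke the Shohat--Freud orthogonal-polynomial construction, which is indeed indispensable for identifying the explicit value $x_1=2\,\Gamma(3/4)/\Gamma(1/4)$. The paper instead proves existence for the general equation (\ref{eq_curr_x_n}) by an entirely elementary nested-interval argument (Theorem~\ref{theorem_existence1}, attributed to P.~Varj\'u): viewing each $x_n$ as a function of the initial datum $t=x_1$, one constructs intervals $(\alpha_n,\beta_n)$ on which $x_1,\dots,x_n$ are all positive, and any $t^\ast$ in their intersection works. This buys generality (arbitrary positive $\ell_n,\sigma_{n,j}$) and avoids any appeal to orthogonal polynomials, but of course does not pin down $x_1$.

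\emph{Uniqueness.} Here the comparison is more interesting. You derive the same relation $|w_{n+1}|+|w_{n-1}|\ge A_n|w_n|$ with $A_n\ge 2$, then declare this insufficient and bring in the full Freud asymptotic $x_n\sim\sqrt{n/3}$ to push $A_n$ up to~$4$. The paper's Theorem~\ref{theorem_uniqueness1} shows that your ``Main obstacle'' is illusory: the inequality $2|w_n|\le|w_{n+1}|+|w_{n-1}|$ says precisely that $(|w_n|)$ is a \emph{convex} sequence, and Lemma~\ref{lemma_convex1} observes that a convex nonnegative sequence with $\liminf |w_n|/n\le 0$ (which follows already from the crude bound $|w_n|\le 2\sqrt n$) and $|w_0|=0$ must vanish identically. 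So the paper extracts uniqueness from $A_n\ge 2$ alone, with no asymptotics whatsoever; your detour through the Freud Kunstgriff is correct but unnecessary for this step.
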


We wrote ``almost-birthplace'', because while PN was working on
\cite{nevai1983}, he visited the IBM Research Center in Yorktown
Heights, New York, in December, 1981, where he discussed orthogonal
polynomials with Freud-type exponential weights and mentioned a
conjecture that was the essence of Theorem~\ref{thm_nevai1983}. John S.
Lew was in the audience and one thing led to another. In the end, Lew,
together with Donald~A.~Quarles, wrote a magnificent paper that, as far
as we know, was the first study of generalizations of (\ref{eq_x_n})
where orthogonal polynomials no longer occupied a central place and the
primary object of interest was existence and uniqueness of positive
solutions. Lew--Quarles's equation is
\begin{equation} \label{eq_l_n_and_x_n}
	\ell_n = x_n \(x_{n+1} + x_n + x_{n-1}\),
	\qquad x_0\in\RR, \quad \ell_n > 0, \quad n\in\NN ,
\end{equation}
and they proved a very general theorem that contains, as a special case,
\cite[Theorem~3, p.~268]{nevai1983}, see \cite[Theorem~6.3,
p.~369]{lew1983}.\footnote{Interestingly, PN and Lew--Quarles mutually
cross-credit each other for the result; the reason being that (i) they
corresponded while working on their papers, and (ii) PN happened to be
the editor of Lew--Quarles's paper that was published in
J.~Approximation Theory.}

The year 1984 produced two more papers
\cite{nevai1984_1,nevai1984_2}\footnote{\cite{nevai1984_2} was received
by SIMA on April 5, 1983.} where (\ref{eq_xto4}) is discussed, see
\cite[middle of p.~420]{nevai1984_1} and \cite[(2),
p.~1177]{nevai1984_2}. However, nothing is done with the equation
outside of the scope of orthogonal polynomials.

Real progress came in 1984 with
\cite{nevai1984_3}\footnote{\cite{nevai1984_3} was received by JAT on
March 28, 1983.} where Theorem~\ref{thm_nevai1983} was extended to the
following, see \cite[(iii), p.~142]{nevai1984_3}.

\begin{theorem} \label{thm_nevai1984_3}

Given $c>0$ and $K\in\RR$, the equation
\begin{equation} \label{eq_x_ncK} 
	n = c \, x_n \(x_{n+1} + x_n + x_{n-1}\) + K \, x_n,
	\qquad x_0=0, \quad n\in\NN ,
\end{equation}
has a unique positive solution $\(x_n\)_{n\in\NN}$, and this solution is
obtained by setting
\begin{equation} \label{eq_x_1cK} 
	x_1 =
	\frac
	{\int_\RR x^2 \exp \( - \frac {c} {4} x^4 - \frac {K} {2} x^2 \) }
	{\int_\RR \exp \( - \frac {c} {4} x^4 - \frac {K} {2} x^2 \) }
	\, dx \, .
\end{equation}

\end{theorem}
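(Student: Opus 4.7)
The plan is to mirror the two-step strategy of Theorem~\ref{thm_nevai1983}: first \emph{construct} a positive solution via orthogonal polynomials (which yields both existence and the explicit formula \eqref{eq_x_1cK} for $x_1$), and then establish \emph{uniqueness} by a shooting/separation argument in the spirit of Lew--Quarles.

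For existence, I would introduce the symmetric Shohat--Freud weight
\[
w\(x\) \DEF \exp\(-\tfrac{c}{4}x^4 - \tfrac{K}{2}x^2\), \qquad x\in\RR,
\]
which, since $c>0$, is positive, integrable, and has finite moments of every order. Let $\(p_n\)$ denote the associated orthonormal polynomials; evenness of $w$ forces a symmetric three-term recurrence $x p_n\(x\) = a_{n+1} p_{n+1}\(x\) + a_n p_{n-1}\(x\)$ with $a_n > 0$ and $a_0 \DEF 0$. I would compute
\[
A_n \DEF \int_\RR \(c x^3 + K x\) p_n\(x\) p_{n-1}\(x\) w\(x\)\,dx
\]
in two ways: integration by parts using $w'\(x\) = -\(c x^3 + K x\) w\(x\)$ yields $A_n = n / a_n$, while expanding $x^3 p_{n-1}\(x\)$ through the recurrence yields $A_n = c a_n \(a_{n+1}^2 + a_n^2 + a_{n-1}^2\) + K a_n$. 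Equating the two evaluations produces the string equation
\[
n = c a_n^2 \(a_{n+1}^2 + a_n^2 + a_{n-1}^2\) + K a_n^2, \qquad n \in \NN,
\]
so that $x_n \DEF a_n^2 > 0$ solves \eqref{eq_x_ncK}. The identification $a_1^2 = \int_\RR x^2 w\,dx / \int_\RR w\,dx$ then follows from $p_0 = \(\int_\RR w\)^{-1/2}$ and $p_1\(x\) = x \(\int_\RR x^2 w\)^{-1/2}$, recovering \eqref{eq_x_1cK}.

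For uniqueness, parametrize solutions by $\xi > 0$ via the formal iteration $F_0 \DEF 0$, $F_1 \DEF \xi$, and
\[
F_{n+1}\(\xi\) \DEF \frac{n - K F_n\(\xi\)}{c\, F_n\(\xi\)} - F_n\(\xi\) - F_{n-1}\(\xi\);
\]
each $F_n$ is rational in $\xi$ off a finite pole set, and the existence step shows that $\Xi \DEF \lb \xi > 0 : F_n\(\xi\) > 0 \text{ for all } n \in \NN \rb$ is nonempty. Adapting the scheme of \cite[Theorem~6.3, p.~369]{lew1983}, I would decompose $\RR^+ \setminus \Xi$ into two nonempty open subsets distinguished by the sign carried at the first failure of positivity---both populated via a priori estimates as $\xi \to 0^+$ and as $\xi \to \infty$---and then collapse $\Xi$ to a single point by a monotonicity analysis of the iterates in $\xi$. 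An equivalent comparison approach begins with the Freud \emph{Kunstgriff}, which gives $x_n \sim \sqrt{n / \(3 c\)}$ for any positive solution; for two such solutions the difference $d_n \DEF x_n - y_n$ satisfies
\[
d_{n+1} + \(1 + \frac{n}{c\, x_n y_n}\) d_n + d_{n-1} = 0, \qquad d_0 = 0,
\]
a recurrence whose limiting characteristic roots are $-2 \pm \sqrt{3}$. One excludes the dominant mode via the a priori bound $\lv d_n \rv = O\(\sqrt{n}\)$ combined with Poincar\'e's theorem, and then the subdominant mode via the single constraint $d_0 = 0$.

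The main obstacle is the uniqueness step. The extra linear term $K x_n$---especially when $K < 0$---could in principle introduce spurious positive branches into the Lew--Quarles shooting (one must track the sign of $n - K F_n\(\xi\)$ as $\xi$ varies) or a nontrivial component in the subdominant direction of $d_n$ that is nevertheless compatible with $d_0 = 0$. Verifying that the monotonicity/convexity structure behind \cite{lew1983,nevai1983} survives the perturbation by $K$ uniformly in $n$ is the genuine technical content of the argument; by comparison, the derivation of the string equation and the Freud scaling $\sqrt{n / \(3 c\)}$ are routine adaptations of the $K = 0$ case.
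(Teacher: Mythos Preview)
The paper does not itself prove this theorem: it is quoted from \cite{nevai1984_3} as a historical result, and the surrounding discussion emphasizes that its proof ``is anything but elementary'' and rests on ``orthogonal polynomials, Fourier integrals, and the moment problem.'' Your existence argument --- deriving the string equation from the Shohat--Freud weight and reading off \eqref{eq_x_1cK} from $a_1^2$ --- is correct and is exactly the orthogonal-polynomial ingredient alluded to.

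The genuine gap is uniqueness for $K<0$, and neither of your sketches closes it. The Lew--Quarles/Hajela shooting route is precisely the approach the paper singles out as falling short: Hajela's argument in \cite{hajela1987} requires $\kappa_n\ge 0$, and the paper's own Theorem~\ref{theorem_uniqueness1}, specialized to $\sigma_{n,j}\equiv c$, $\ell_n=n$, $\kappa_n=K$, lands in case \eqref{eq_uniquenessb}, whose second inequality becomes $0\le K\sqrt{c}$. Your Poincar\'e alternative is more promising, but the decisive step --- ``exclude the subdominant mode via the single constraint $d_0=0$'' --- is unjustified: Poincar\'e--Perron gives only ratio asymptotics, not a basis decomposition compatible with initial data, so for a variable-coefficient recurrence there is no a priori reason the one-dimensional subspace $\{d_0=0\}$ should miss the subdominant solutions. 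You have correctly located the obstacle but not removed it; the original proof in \cite{nevai1984_3} bypasses it via the moment problem and Fourier-analytic tools for the weight $\exp\bigl(-\tfrac{c}{4}x^4-\tfrac{K}{2}x^2\bigr)$, machinery you have not invoked.
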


Of course, the $c$ parameter adds nothing new and in all proofs it can be
assumed, without loss of generality, to be equal to $1$ or $4$ or whatever
one finds more convenient.
However, the additional parameter $K$ represents real progress. It comes
from orthogonal polynomials associated with weights $\exp \( - \frac {c}
{4} x^4 - \frac {K} {2} x^2 \) $ on $\RR$. As a matter of fact,
\cite{nevai1984_3} is the ``almost-birthplace'' of orthogonal
polynomials associated explicitly with such weight functions. We wrote
``almost-birthplace'', because they also appear in Daniel Bessis' 1979
paper \cite[(III.1), p.~151]{bessis1979} where the weight function is
$\exp \( - \beta x^4 - \frac {1} {2} x^2 \) $ on $\RR$. As long as $K >
0$, these two weights are equivalent to each other. However, as soon as
$K < 0$, the rules of the game change drastically. We will return to
this in a moment. For some reason unknown to us, \cite{bessis1979}
doesn't treat the case $K=0$ even though in 1979 that would have been
opening up new vistas as well. The equivalent of (\ref{eq_x_ncK}) is
lurking in \cite[(IV.18), p.~151]{bessis1979} a telescoping summation
leads to the equivalent of (\ref{eq_x_ncK}). In the 1980
Bessis--Itzykson--Zuber paper \cite{bessis1980}, the $K>0$ equivalent
also pops up although it's a little harder to recognize it, see, e.g.,
\cite[(4.32), p.~126]{bessis1980} where it is referred to as the
``quartic case'', and then \cite[(4.33), p.~126]{bessis1980} is the
equation corresponding to (\ref{eq_x_ncK}). Since none of us is capable
of understanding either of these papers, we won't comment on
them except for emphasizing that in both papers $K>0$. If the reader is
interested, he can check out \cite[\S6, p.~128--131]{bessis1980},
especially the last sentence that refers to ``\emph{$N\to\infty$ selects
out a unique initial condition, in the sense of asymptotic series, which
is precisely (6.18)\/}'' where the latter formula is essentially
(\ref{eq_x_1cK}).

For the sake of fairness, let us point out that in the 1980s neither
Lew--Quarles nor PN were familiar with \cite{bessis1979,bessis1980}. Had
they been aware of these papers, it might have been a game changer.

The reason that we mentioned these two papers is that they subsequently
became the standard reference as the birthplace of Painlev\'e's discrete
equation \#1, that is, $\textup{d-P}_{\mbox{\small \Romanbar{I}}}$ even
though the case $K<0$ was not even considered in them. On the other
hand, \cite{nevai1984_3} was fully ignored by practically all Painlev\'e
experts.
If the reader wants to find out what Painlev\'e
$\textup{d-P}_{\mbox{\small \Romanbar{I}}}$ is, he can turn to Google
or, even better, read one of Alphonse Magnus' excellent survey papers
such as \cite{magnus1999} who is also well familiar with the work done
by PN and his collaborators in the 1980s.

In 1984 PN mentioned his papers and those of Stan Bonan and Lew--Quarles  to
Dan Hajela who at the time was a student in his introductory real
analysis class, and told him how interesting it would be to find new
approaches to studying difference equations of the type mentioned above.
Hajela turned his attention to a combination of (\ref{eq_l_n_and_x_n}) and
(\ref{eq_x_ncK}), and in \cite{hajela1987} he came up
with\footnote{Hajela in \cite[Theorem~2.2, p.~210]{hajela1987} writes $\ell_n \ge
0$ but that appears to be a typo.}
\begin{equation} \label{eq_l_n_and_x_n_and_k_n}
	\ell_n = x_n \(x_{n+1} + x_n + x_{n-1}\) + \kappa_n \, x_n,
	\qquad x_0\in\RR, \; \ell_n > 0, \;  \kappa_n \in \RR, \; n\in\NN .
\end{equation}
Among others, he found a new proof of Theorem~\ref{thm_nevai1984_3} but,
very unfortunately, only for the case when $K\in\RR_+$ where $\RR_+$ is,
again very unfortunately, undefined, although clearly it is either the set of
positive or nonnegative real numbers, most likely the latter, see
\cite[Theorem~2.2, p.~210]{hajela1987}.


The proof of Theorem~\ref{thm_nevai1983} is elementary whereas the proof
of Theorem~\ref{thm_nevai1984_3} is anything but elementary. Although PN
was the editor of \cite{hajela1987}, he somehow missed or forgot that
for uniqueness of positive solutions in (\ref{eq_l_n_and_x_n_and_k_n})
the parameter $\kappa_n$ had to be nonnegative. Hence, for 25 years, PN
was under the false impression that there is a proof of
Theorem~\ref{thm_nevai1984_3} that is not based on orthogonal
polynomials, Fourier integrals, and the moment problem, but, instead,
relies on some rather elementary fixed point arguments. Therefore, he no
longer sought an elementary solution although the equation
(\ref{eq_x_ncK}) was always on his mind. As a matter of fact, PN
mentioned (\ref{eq_x_ncK}), and its special case (\ref{eq_x_n}) to
Vilmos Totik March of 2003 who thought it would be a good problem for a
Schweitzer competition,\footnote{See
\texttt{en.wikipedia.org/wiki/Mikl\'os\_Schweitzer\_Competition}.} and the
uniqueness of positive solutions of (\ref{eq_x_n}) was indeed included as
Problem~\#6 in 2003.\footnote{Go to the website
\texttt{versenyvizsga.hu/external/vvszuro/vvszuro.php}, first click on
``\emph{B\"ong\'e\-sz\'es}'', then on ``\emph{Schweitzer Mikl\'os
Eml\'ekverseny}', and scroll down to ``\emph{2003 1. kateg. 1. ford. 13.
\'evfolyam}''.} 
Thanks to the participants and Vilmos Totik, we were given access to some of
the ingenious proofs by Rezs\H{o} L\'aszl\'o Lovas, Andr\'as M\'ath\'e,
Tam\'as Terpai, and P\'eter Varj\'u. Varj\'u even included a proof for the
existence of positive solutions that we borrowed and adopted in this paper,
see Theorem~\ref{theorem_existence1}.\footnote{As one of the referees pointed
it out, \cite{collatz1966} is a good source for fixed point theorems and
monotonically decomposable iterative processes, see especially
\cite[\S21]{collatz1966}.} We thank all of them for sharing their solutions
with us.



Fast forward to February of 2013. PN and JSZ spent two weeks with SA at KAU in
Jeddah chock-full of heated discussions of equations of the type described
above and lamenting that there is a lack of any new developments in the
area of existence and uniqueness of positive solutions. At the end of their
visits they flew to Riyadh to attend a workshop on special functions
where they met WVA who overheard them talking about the above equations
and casually mentioned that his talk next day will be
about discrete Painlev\'e equations which is just a fancy term
describing the same object. The rest is history and this is the first
installment of what is expected to be a long term research project.

\section{Notation}
\label{sec_notation}

For $a\in\RR$, the negative and positive parts of $a$ are denoted by $a^-$
and $a^+$, respectively; they are defined the usual way, for instance, $a^-
\DEF (|a| - a)/2$.

We call a sequence, say, $\bZ \DEF \(z_n\)$ positive, if $\(z_n\) \in \RR^+$,
that is, $z_n > 0$ for each $n$ in the domain of $\bZ$. A sequence $\bZ \DEF
\(z_n\)$ is nonnegative if $\(z_n\) \in \RR^+_0$, that is, $z_n \ge 0$ for
each $n$ in the domain of $\bZ$.

We will study solutions $\bX \DEF \(x_n\)_{n\in\NN}$ of nonhomogeneous nonlinear
second order difference equations (recurrence or recursive formulas)
of the type
\begin{align} \label{eq_curr_x_n}
	\ell_n = 
		x_n &\( \sigma_{n,1} \, x_{n+1} + \sigma_{n,0} \, x_n + \sigma_{n,-1} \, x_{n-1} \) + \kappa_n \, x_n ,
		\quad n\in\NN , 
	\\
	&\mbox{with given initial data } 
		\{ x_0 \in \RR \;\; {\rm{\&}} \;\; x_1 \in \RR^+ \} \notag
\end{align}
where
\begin{equation} \label{eq_cond1}
	\(\ell_n\)_{n\in\NN} \in \RR^+
	\; {\rm{\&}} \;
	\(\sigma_{n,0}\)_{n\in\NN} \in\RR^+
	\; {\rm{\&}} \;
	\(\kappa_n\)_{n\in\NN} \in\RR ,
\end{equation}
and the left and right $\sigma$-coefficients satisfy either
\begin{equation} \label{eq_cond2}
	\(\sigma_{n,1}\)_{n\in\NN} \in\RR^+
	\; {\rm{\&}} \;
	\(\sigma_{n,-1}\)_{n\in\NN} \in\RR^+
\end{equation}
or
\begin{equation} \label{eq_cond3}
	\(\sigma_{n,1}\)_{n\in\NN} \in\RR^+_0
	\; {\rm{\&}} \;
	\(\sigma_{n,-1}\)_{n\in\NN} \in\RR^+_0  .
\end{equation}
Note that in (\ref{eq_cond1}) \& (\ref{eq_cond2}) all $\sigma$-coefficients
must be positive whereas in (\ref{eq_cond1}) \& (\ref{eq_cond3}) the left and
right $\sigma$-coefficients may vanish. However, the biggest semantic
difference between (\ref{eq_cond2}) and (\ref{eq_cond3}) is that in the
latter case, because the coefficient of $x_{n+1}$ may vanish, pedantically
speaking, the terms \emph{recurrence} or \emph{recursive formula} are no
longer appropriate although the term \emph{difference equation} is still
valid.

\section{Existence}
\label{sec_existence}

In this section, we prove the following theorem about existence of positive
solutions of (\ref{eq_curr_x_n}).

\begin{theorem}\label{theorem_existence1}

Let the conditions in {\rm(\ref{eq_cond1})} and {\rm(\ref{eq_cond2})} be
satisfied. Then, for every $x_0 \in \RR$,  there exists at least one $x_1
\in \RR^+$ such that the equation {\rm(\ref{eq_curr_x_n})} has a positive
solution $\bX = \(x_n\)_{n\in\NN}$.

\end{theorem}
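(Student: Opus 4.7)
The plan is a shooting argument in $x_1$. Since $\sigma_{n,1}>0$ under (\ref{eq_cond2}), we may solve (\ref{eq_curr_x_n}) for $x_{n+1}$ to get the forward recursion
\begin{equation*}
x_{n+1} \;=\; \frac{\ell_n - \kappa_n x_n - \sigma_{n,0} x_n^2 - \sigma_{n,-1} x_n x_{n-1}}{\sigma_{n,1}\, x_n}.
\end{equation*}
With $x_0\in\RR$ fixed, each iterate $x_n=x_n(x_1)$ is a continuous (rational) function of $x_1>0$ on the open set where all earlier iterates are positive. The goal is to exhibit an $x_1^*>0$ for which $x_n(x_1^*)>0$ for every $n\in\NN$.

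My first step would be to construct, by induction on $n\ge 2$, a strictly nested sequence of nonempty open intervals $J_n=(\alpha_n,\beta_n)\subset\RR^+$ satisfying: (a) $x_k(x_1)>0$ for $1\le k\le n$ and every $x_1\in J_n$; and (b) $J_n$ has one endpoint --- call it \emph{fresh} --- at which $x_n\to 0^+$ from inside, and the other endpoint --- call it \emph{inherited} --- at which $x_{n-1}\to 0^+$, forcing $x_n\to+\infty$. The base case $n=2$ is immediate: $x_2(x_1)$ is strictly decreasing from $+\infty$ (as $x_1\to 0^+$) to $-\infty$ (as $x_1\to\infty$), giving a unique zero $\beta_2$ and $J_2=(0,\beta_2)$. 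For the inductive step, the formula above gives $x_{n+1}\to+\infty$ at the fresh endpoint (via $\ell_n/(\sigma_{n,1}x_n)$ with $x_n\to 0^+$) and $x_{n+1}\to-\infty$ at the inherited endpoint (the $-\sigma_{n,0}x_n^2$ term dominates since $x_n\to+\infty$ while the other terms stay bounded). By the intermediate value theorem there is a zero of $x_{n+1}$ strictly inside $J_n$; taking the appropriate supremum or infimum (alternating with the parity of $n+1$) yields $J_{n+1}\subsetneq J_n$ with (a) and (b), with strict tightening on the side newly updated.

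Since $\alpha_n$ strictly increases on odd steps and $\beta_n$ strictly decreases on even ones, the limits $\alpha^*=\lim\alpha_n$ and $\beta^*=\lim\beta_n$ satisfy $\alpha_n<\alpha^*\le\beta^*<\beta_n$ for every $n$. Any $x_1^*\in[\alpha^*,\beta^*]$ therefore lies in $(\alpha_n,\beta_n)=J_n$ for every $n$, and property (a) delivers $x_n(x_1^*)>0$ for every $n\in\NN$. The main technical point is the asymptotic analysis underlying the inductive step, especially the claim that $x_{n+1}\to-\infty$ at the inherited endpoint: this relies on the indeterminate-looking product $x_n x_{n-1}$ tending to the finite constant $\ell_{n-1}/\sigma_{n-1,1}$ (read off from the previous step's recursion when $x_{n-1}\to 0^+$), so that the $\sigma_{n,-1}x_nx_{n-1}$ term in the numerator stays bounded and the divergent $-\sigma_{n,0}x_n^2$ term wins. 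Once this endpoint propagation is in hand, everything else is continuity and the intermediate value theorem.
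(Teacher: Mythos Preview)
Your shooting argument with nested intervals is correct and is essentially the same approach the paper takes. The only real difference is endpoint bookkeeping: you keep the ``inherited'' endpoint (where $x_{n-1}=0$ and $x_n\to+\infty$) and argue $x_{n+1}\to-\infty$ there, which forces you to control the product $x_n x_{n-1}$; the paper instead moves that endpoint inward to the point where $x_n$ equals the positive root of $\sigma_{n,0}x^2+\kappa_n x-\ell_n=0$, so that $x_{n+1}$ takes the \emph{finite} negative value $-\tfrac{\sigma_{n,-1}}{\sigma_{n,1}}x_{n-1}$ there --- this avoids the limit analysis and makes \emph{both} $\alpha_n$ and $\beta_n$ strictly monotone at every step. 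One small wording slip: when you say ``the other terms stay bounded,'' note that $-\kappa_n x_n$ is $O(x_n)$, not bounded; what matters is that it is $o(x_n^2)$, and (as you correctly isolate afterward) that $x_n x_{n-1}\to\ell_{n-1}/\sigma_{n-1,1}$ stays bounded.
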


\begin{proof}

Introducing $t \DEF x_1$, we may view $x_n \equiv x_n(t)$, for each
$n\in\NN$, as a function of $t$ on $\RR^+_0$ with the exception of those
points $t$ where the equation (\ref{eq_curr_x_n}) can't be solved for the
senior term because either the previous one vanishes or when some of the
earlier terms are undefined. For instance, $x_1(t) \equiv t$. 

First, we will construct two strictly monotone sequences
$\balpha \DEF \(\alpha_n\)_{n\in\NN}$ and $\bbeta \DEF \(\beta_n\)_{n\in\NN}$ such that
\begin{equation} \label{eq_sequences}
	-\infty < \alpha_1 < \alpha_2 < \alpha_3 < \dots < \beta_3 < \beta_2 < \beta_1 < \infty
\end{equation}
with the property that, for each $n\in\NN$, we have $x_n(t) > 0$ for $t \in
(\alpha_n,\beta_n)$ and then (\ref{eq_curr_x_n}) can be solved for
$x_{n+1}(t)$ both in $(\alpha_n,\beta_n)$ and at those endpoints of
$(\alpha_n,\beta_n)$ where $x_n(t) \ne 0$.

We will generate the sequences $\balpha$ and $\bbeta$ in such a way that for
each $n\in\NN$ the function $x_n$ is continuous in $[\alpha_n,\beta_n]$,
\begin{equation} \label{eq_alpha}
	x_n(\alpha_n) =
	\lb
		\begin{array}{ll}
			0 
				& \mbox{if $n\in\NN$ is odd,} 
		\\
			\frac {-\kappa_n + \sqrt{\kappa_n^2 + 4 \sigma_{n,0} \, \ell_n}} {2 \sigma_{n,0}} 
				& \mbox{if $n\in\NN$ is even,}
		\end{array}
	\right.
\end{equation}
and
\begin{equation} \label{eq_beta}
	x_n(\beta_n) =
	\lb
		\begin{array}{ll}
			1 + \frac 
				{- \( \sigma_{1,-1} \, x_0 + \kappa_1 \) + \sqrt{\( \sigma_{1,-1} \, x_0 + \kappa_1 \)^2 + 4 \sigma_{1,0} \, \ell_1}} 
				{2 \sigma_{1,0}} 
				& \mbox{if $n = 1$,}
		\\
			\frac {-\kappa_n + \sqrt{\kappa_n^2 + 4 \sigma_{n,0} \, \ell_n}} {2 \sigma_{n,0}} 
				& \mbox{if $n\in\NN \setminus \{1\}$ is odd,} 
		\\
			0 
				& \mbox{if $n\in\NN$ is even.}
		\end{array}
	\right.
\end{equation}
The construction will be made by semi-complete induction; the reason for treating $n=1$
separately in (\ref{eq_beta}) will be explained shortly. 

We define the first term $\alpha_1 \DEF 0$ and then pick $\beta_1 > 0$ in such a way that
\begin{equation} \label{r11} \notag
	\frac{\ell_1}{\beta_1} - \sigma_{1,0} \, \beta_1 - \sigma_{1,-1} \, x_0-  \kappa_1 < 0 ,
\end{equation}
that is, $\beta_1$ is greater than the positive zero of the quadratic polynomial
\begin{equation} \label{r12} \notag
	\sigma_{1,0} \, t^2 + \( \sigma_{1,-1} \, x_0 + \kappa_1 \) t - \ell_1 ,
\end{equation}
and one possible choice for $\beta_1$ is 
\begin{equation} \label{r13} \notag
	\beta_1 \DEF  
	1 + \frac 
		{- \( \sigma_{1,-1} \, x_0 + \kappa_1 \) + \sqrt{\( \sigma_{1,-1} \, x_0 + \kappa_1 \)^2 + 4 \sigma_{1,0} \, \ell_1}} 
		{2 \sigma_{1,0}} 
		\, .
\end{equation}
We have $x_1(t) \equiv t$ so that $x_1$ is continuous in $[\alpha_1, \beta_1]$, is positive in
$(\alpha_1 , \beta_1)$, and the first relations in (\ref{eq_alpha}) and
(\ref{eq_beta}) are also satisfied with $n = 1$. 

Now let $n = 1$. Then, by (\ref{eq_curr_x_n}) we have 
\begin{equation} \label{r14} \notag
	\sigma_{1,1} \, x_2(t) =
	\frac{\ell_1}{x_1(t)} - \sigma_{1,0} \, x_1(t)- \sigma_{1,-1} \, x_0-\kappa_1 ,
\end{equation}
so that $ x_2$ is continuous on $(\alpha_1, \beta_1)$,
\begin{equation} \label{r15}
	\lim_{t\to\alpha_1+0}x_2(t) = +\infty
\end{equation}
and, by the choice of $\beta_1$ in (\ref{eq_beta}),
\begin{equation} \label{r16} \notag
	\sigma_{1,1} \, x_2(\beta_1) = 
	\frac{\ell_1}{x_1(\beta_1)} - \sigma_{1,0} \, x_1(\beta_1) - \sigma_{1,-1} \, x_0 - \kappa_1 < 0 .
\end{equation}
Therefore, $x_2$ has at least one zero in $(\alpha_1, \beta_1)$. Let
\begin{equation} \label{r17} \notag
	\beta_2 \DEF \inf \{t: t >\alpha_1,\;x_2(t) = 0\} ,
\end{equation}
and then $\alpha_1 < \beta_2 < \beta_1$ and, by continuity, $x_2(\beta_2) = 0$ as well.
Using (\ref{r15}) and the intermediate value theorem, we can find $\alpha_2 \in (\alpha_1, \beta_2)$
such that
\begin{equation} \label{eq_alpha11} \notag
	x_2(\alpha_2) =
	\frac{\sqrt{\kappa_2^2 + 4 \sigma_{2,0} \, \ell_2}-\kappa_2}{2\sigma_{2,0}} \,.
\end{equation}
Hence, $\alpha_2$, $\beta_2$, and $x_2$ all have the prescribed properties. 

The next step involves a semi-complete induction in the sense that passing to
$n+1$ from $n$ will also use the inductive assumption for $n-1$. The
inductive step is almost but not exactly the same as we went to $x_2$ from
$x_1$ and $x_0$.

Suppose the construction with the properties mentioned above has been done up
to $n$, and now we proceed with it for $n+1$. We assume that $n$ is odd; the
other case is similar. Writing (\ref{eq_curr_x_n}) in the form
\begin{equation} \label{r1} \notag
	\sigma_{n,1} \, x_{n+1}(t) =
	\frac{\ell_n}{x_n(t)}-\sigma_{n,0} \, x_n(t)-\sigma_{n,-1} \, x_{n-1}(t) - \kappa_n ,
\end{equation}
we can see that $x_{n+1}$ is continuous on $(\alpha_n,\beta_n)$ because, by
the inductive hypotheses, (i) $x_n$ is positive and continuous on
$(\alpha_n,\beta_n)$, (ii) $x_{n-1}$ is positive and continuous on
$(\alpha_{n-1},\beta_{n-1})$, and (iii) $[\alpha_n,\beta_n] \subset
(\alpha_{n-1},\beta_{n-1})$. Furthermore, by the first relation in
(\ref{eq_alpha}), the second in (\ref{eq_beta}), and by simple algebra, we
have
\begin{equation} \label{eq_values}
	\lim_{t\to\alpha_n+0}x_{n+1}(t) = +\infty
	\quad {\rm{\&}} \quad
	x_{n+1}(\beta_n) =
	- \frac {\sigma_{n,-1}} {\sigma_{n,1}} x_{n-1}(\beta_n) < 0
\end{equation}
because ${\sigma_{n,1}}$ and ${\sigma_{n,-1}}$ are both positive.
Hence, $x_{n+1}$ has at least one zero in $(\alpha_n,\beta_n)$. Let
\begin{equation} \label{eq_beta1} \notag
	\beta_{n+1} \DEF \inf \{t: t >\alpha_n,\;x_{n+1}(t) = 0\} .
\end{equation}
Then, by continuity, $x_{n+1}(\beta_{n+1}) = 0$ and, by the first limit in
(\ref{eq_values}), $\beta_{n+1} \in (\alpha_n,\beta_n)$. By the construction,
$x_{n+1} > 0$ in $(\alpha_n,\beta_{n+1})$. Again by the first limit in
(\ref{eq_values}), the intermediate value theorem guarantees the existence of
$\alpha_{n+1} \in (\alpha_n,\beta_{n+1})$ such that
\begin{equation} \label{eq_alpha1} \notag
	x_{n+1}(\alpha_{n+1}) =
	\frac{\sqrt{\kappa_{n+1}^2+4 \sigma_{n+1,0} \, \ell_{n+1}}-\kappa_{n+1}}{2\sigma_{n+1,0}} \,.
\end{equation}
This proves that $x_{n+1}$ has all the required properties.

Now the theorem follows immediately, since, by the nested interval
theorem,\footnote{We should rather say that by a version of the nested
interval theorem since our intervals are open and we also need the fact that
the closure of each interval lies inside the interior of its parent
interval.} we can pick a number $t^\ast$, that is, an initial value $x_1$
such that
\begin{equation}  \notag 
	\lim_{n\to\infty} \alpha_n \le  t^\ast \le \lim_{n\to\infty}\beta_n ,
\end{equation}
and then the sequence $\{x_n(t^\ast)\}_{n\in\NN}$ is a positive solution of
(\ref{eq_curr_x_n}).
\end{proof}

\begin{note}

Of course, if
\begin{equation} \notag
	\lim_{n\to\infty} \alpha_n < \lim_{n\to\infty}\beta_n
\end{equation}
above, then every $ t^\ast$ between those two limits would yield a
positive solution. However, for all practical purposes this observation
is useless since we have no actual information about those limits. A
similarly ``useless'' observation was made in \cite[Theorem~4.3,
p.~365]{lew1983}, see $x^{\pm}$ there.

\end{note}

\section{Uniqueness}
\label{sec_uniqueness}

In this section, we study uniqueness of positive solutions of
(\ref{eq_curr_x_n}). We will need the following lemma that is no doubt
well known and is straightforward anyway.

\begin{lemma} \label{lemma_convex1}

If $\bOmega \DEF \(\omega_n\)_{n=0}^\infty$ is a convex sequence of real numbers
that grows slower than linear, that is,
\begin{equation} \label{eq_convex1}
	2 \omega_n \le \omega_{n+1} + \omega_{n-1}, \qquad \forall n\in\NN ,
\end{equation}
and
\begin{equation} \label{eq_convex2}
	\liminf_{n\to\infty} \frac {\omega_n} {n} \le 0 \, ,
\end{equation}
then $\bOmega$ is a nonincreasing sequence. In particular, if $\bOmega$
is a nonnegative sequence with $\omega_0 = 0$, then $\omega_n = 0$ for
$n\in\NN$.

\end{lemma}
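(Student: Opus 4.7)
The plan is to reduce the statement to a monotonicity argument on the first-difference sequence. Define $d_n \DEF \omega_{n+1} - \omega_n$ for $n \ge 0$. The convexity hypothesis (\ref{eq_convex1}) rewrites as $d_n \ge d_{n-1}$ for every $n \in \NN$, so $\bU \DEF (d_n)_{n=0}^\infty$ is a nondecreasing sequence of real numbers.

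The core step is to show that $d_n \le 0$ for all $n \ge 0$. I would argue by contradiction: assume there exists some index $n_0$ with $d_{n_0} > 0$. Then, by monotonicity of $\bU$, we have $d_k \ge d_{n_0} > 0$ for every $k \ge n_0$, and telescoping yields
\begin{equation} \notag
  \omega_n = \omega_{n_0} + \sum_{k=n_0}^{n-1} d_k \ge \omega_{n_0} + (n - n_0)\, d_{n_0}, \qquad n > n_0 .
\end{equation}
Dividing by $n$ and letting $n \to \infty$, this forces $\liminf_{n\to\infty} \omega_n/n \ge d_{n_0} > 0$, in direct contradiction with (\ref{eq_convex2}). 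Hence $d_n \le 0$ for all $n$, which is precisely the statement that $\bOmega$ is nonincreasing.

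For the ``in particular'' clause, the nonincreasing property together with $\omega_0 = 0$ gives $\omega_n \le \omega_0 = 0$ for every $n \in \NN$; combining this with the assumed nonnegativity $\omega_n \ge 0$ yields $\omega_n = 0$ for all $n$.

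There is no genuine obstacle here — everything reduces to telescoping and a single contradiction argument. The only small thing worth being careful about is that the hypothesis is a $\liminf$, not a $\lim$ or $\limsup$, so the contradiction must be phrased in terms of a lower bound for $\omega_n/n$ valid for \emph{all} large $n$ (which the telescoping inequality above supplies); this is why I isolate a single witnessing index $n_0$ rather than pass to a subsequence.
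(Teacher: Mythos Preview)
Your proof is correct and follows essentially the same approach as the paper: both observe that convexity makes the first-difference sequence $(\omega_{n+1}-\omega_n)$ nondecreasing, then telescope to compare $\omega_q - \omega_n$ with $(q-n)(\omega_n - \omega_{n-1})$ and use the $\liminf$ hypothesis to force each difference to be nonpositive. The paper phrases the last step as a direct limit rather than a contradiction, but the content is identical.
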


\begin{proof}

Rewriting (\ref{eq_convex1}) as 
\begin{equation} \notag 
	\omega_n - \omega_{n-1} \le \omega_{n+1} - \omega_n , \qquad n\in\NN ,
\end{equation}
shows that $\(\omega_n - \omega_{n-1}\)_{n\in\NN}$ is a nondecreasing sequence
so that
\begin{equation}  \notag 
	\omega_n - \omega_{n-1} \le \omega_{p} - \omega_{p-1} , 
	\qquad n,p \in \NN \quad {\rm{\&}} \quad  n \le p,
\end{equation}
from which
\begin{align} \notag 
	(q - n) \( \omega_n - \omega_{n-1} \) =
	\sum_{p=n+1}^q \( \omega_n - \omega_{n-1} \) \le 
	\sum_{p=n+1}^q &\( \omega_p - \omega_{p-1} \) = 
	\omega_q - \omega_n , 
	\\
	& n,q \in \NN \quad {\rm{\&}} \quad n < q , \notag
\end{align}
that is,
\begin{equation} \notag 
	\omega_n - \omega_{n-1} \le 
	\frac{\omega_q - \omega_n} {q-n} \, , 
		\qquad n,q \in \NN \quad {\rm{\&}} \quad n < q , \notag
\end{equation}
and now, fixing $n\in\NN$, letting $q\to\infty$, and taking
(\ref{eq_convex2}) into consideration, we finally see that $\omega_n -
\omega_{n-1} \le 0$ for $n\in\NN$, that is, $\bOmega$ is a nonincreasing
sequence.
\end{proof}

We define $\(\sigma_n\)_{n\in\NN}$ by
\begin{equation} \label{eq_sigma}
	\sigma_n \DEF \max\(\sigma_{n,-1},\sigma_{n,1}\), \qquad n\in\NN ,
\end{equation}
see  (\ref{eq_cond3}), so that $\sigma_n \ge 0$ for $n\in\NN$.


\begin{theorem}\label{theorem_uniqueness1}

With the conditions in {\rm(\ref{eq_cond1}) \& (\ref{eq_cond3})} and with the notation
{\rm(\ref{eq_sigma})}, assume
\begin{equation} \label{eq_liminf0}
	\liminf_{n\to\infty}  
	\frac {1} {n^2} 
		\( \frac {\ell_n} {\sigma_{n,0}} + \frac {\(\kappa_n^-\)^2} {\sigma_{n,0}^2} \) = 0 \,.
\end{equation}
Let $\NN$ be representable as the disjoint union $\NN =
\NN_{\ref{eq_uniquenessa}} \cup \NN_{\ref{eq_uniquenessb}}$ such
that, for each $n\in\NN$, one of the following two displayed conditions 
\begin{equation} \label{eq_uniquenessa} \tag{\davidsstar}
	2 \, \sigma_n \le \sigma_{n,0} \, , 
	\qquad \mbox{{\rm if} } n \in \NN_{\ref{eq_uniquenessa}},
\end{equation}
or
\begin{equation} \label{eq_uniquenessb} \tag{\mbox{\ding{67}}}
	\sigma_n \le \sigma_{n,0} < 2 \sigma_n 
	\;\; {\rm{\&}} \;\;
	-2 \( \sigma_{n,0} - \sigma_n \)  \sqrt{\ell_n} \le 
	\kappa_n \sqrt {2 \sigma_n -\sigma_{n,0}} \, ,
	\quad \mbox{{\rm if} } n \in \NN_{\ref{eq_uniquenessb}}
\end{equation}
is satisfied. 
\\ 
In addition, if $1 \in \NN_{\ref{eq_uniquenessa}}$, then simply let $x_0$ in
{\rm(\ref{eq_curr_x_n})} be an arbitrary real number, whereas if $1 \in
\NN_{\ref{eq_uniquenessb}}$, then we also assume that $x_0$  satisfies
\begin{equation} \label{eq_uniquenessb1}
	-2 \( \sigma_{1,0} - \sigma_1 \)  \sqrt{\ell_1} \le 
	\( \sigma_{1,-1} \, x_0  + \kappa_1 \) \sqrt {2 \sigma_1 -\sigma_{1,0}} \, .
\end{equation}
Then there exists a unique $x^\ast>0$ such that if a sequence $\bX =
\(x_n\)_{n\in\NN}$ satisfying equation {\rm(\ref{eq_curr_x_n})} is positive
then $x_1 = x^\ast$, and, therefore, {\rm(\ref{eq_curr_x_n})} can't have
more than one positive solution.

\end{theorem}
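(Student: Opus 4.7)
\emph{Plan.} Suppose, aiming at a contradiction, that $\bX \DEF \(x_n\)_{n\in\NN}$ and $\bY \DEF \(y_n\)_{n\in\NN}$ are two positive solutions of {\rm(\ref{eq_curr_x_n})} sharing the same initial value $y_0 = x_0$. Setting $u_n \DEF x_n - y_n$, so that $u_0 = 0$, I shall show that $\(\lv u_n \rv\)_{n=0}^\infty$ is a nonnegative convex sequence of sublinear growth; Lemma~\ref{lemma_convex1} will then force $u_n \equiv 0$, whence any positive solution must share the common value $x^\ast \DEF x_1 = y_1$, proving the theorem.

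First I would divide {\rm(\ref{eq_curr_x_n})} by $x_n$ and write the analogous identity with $y_n$ in place of $x_n$. Subtracting, the $\kappa_n$ terms cancel; since the left-hand side becomes $\ell_n \( 1/x_n - 1/y_n \) = - \ell_n \, u_n / \( x_n y_n \)$, the result is the linear three-term recurrence
\begin{equation} \notag
	\sigma_{n,1} \, u_{n+1} + \sigma_{n,-1} \, u_{n-1} + \rho_n \, u_n = 0 ,
	\qquad \rho_n \DEF \sigma_{n,0} + \frac{\ell_n}{x_n \, y_n} > 0 , \quad n \in \NN .
\end{equation}
Taking absolute values, applying the triangle inequality, and using $\sigma_{n,\pm 1} \le \sigma_n$ from {\rm(\ref{eq_sigma})}, one arrives at
\begin{equation} \notag
	\sigma_n \( \lv u_{n+1} \rv + \lv u_{n-1} \rv \) \ge \rho_n \, \lv u_n \rv , \qquad n \in \NN ,
\end{equation}
so the convexity inequality $\lv u_{n+1} \rv + \lv u_{n-1} \rv \ge 2 \lv u_n \rv$ will follow from the single pointwise estimate $\rho_n \ge 2 \sigma_n$.

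Verifying $\rho_n \ge 2 \sigma_n$ is the heart of the proof. For $n \in \NN_{\ref{eq_uniquenessa}}$ it is immediate from $\sigma_{n,0} \ge 2 \sigma_n$. For $n \in \NN_{\ref{eq_uniquenessb}}$ it reduces to $\ell_n / \(x_n y_n\) \ge 2 \sigma_n - \sigma_{n,0}$, which requires an upper bound on $x_n y_n$. Since $x_{n\pm 1}, y_{n\pm 1} \ge 0$, the equation {\rm(\ref{eq_curr_x_n})} implies $\sigma_{n,0} \, x_n^2 + \kappa_n \, x_n \le \ell_n$ and the analogous statement for $y_n$; both are therefore majorized by the common bound
\begin{equation} \notag
	M_n \DEF \frac {-\kappa_n + \sqrt{\kappa_n^2 + 4 \sigma_{n,0} \ell_n}} {2 \sigma_{n,0}} \, .
\end{equation}
A routine manipulation exploiting the identity $\( \sqrt{\kappa_n^2 + 4 \sigma_{n,0} \ell_n} - \kappa_n \) \( \sqrt{\kappa_n^2 + 4 \sigma_{n,0} \ell_n} + \kappa_n \) = 4 \sigma_{n,0} \ell_n$ then shows that $\ell_n / M_n^2 \ge 2 \sigma_n - \sigma_{n,0}$ is equivalent, after squaring and rearranging, to precisely the hypothesis {\rm(\ref{eq_uniquenessb})}. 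The case $n = 1$ is handled identically except that $y_0 = x_0$ enters the quadratic inequality as a constant, forcing $\kappa_1$ to be replaced throughout by $\sigma_{1,-1} \, x_0 + \kappa_1$, which is exactly the role of the extra hypothesis {\rm(\ref{eq_uniquenessb1})}. This delicate algebraic equivalence is what makes the specific form of condition {\rm(\ref{eq_uniquenessb})} sharp; checking it is the only nontrivial obstacle in the proof.

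Finally, to verify the growth hypothesis of Lemma~\ref{lemma_convex1}, the triangle inequality gives $\lv u_n \rv \le x_n + y_n \le 2 M_n$, and the elementary estimate $M_n \le \kappa_n^- / \sigma_{n,0} + \sqrt{\ell_n / \sigma_{n,0}}$, combined with {\rm(\ref{eq_liminf0})}, yields $\liminf_{n \to \infty} \lv u_n \rv / n = 0$. Applying Lemma~\ref{lemma_convex1} to the nonnegative convex sequence $\(\lv u_n \rv\)_{n=0}^\infty$ with $\lv u_0 \rv = 0$ then forces $\lv u_n \rv \equiv 0$, concluding the proof.
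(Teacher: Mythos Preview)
Your proof is correct and follows essentially the same route as the paper's: derive the linear difference relation for $u_n$, establish the convexity inequality $2\lv u_n\rv \le \lv u_{n+1}\rv + \lv u_{n-1}\rv$ via the pointwise bound $\rho_n \ge 2\sigma_n$ (trivial on $\NN_{\ref{eq_uniquenessa}}$, obtained from the quadratic upper bound on $x_n,y_n$ on $\NN_{\ref{eq_uniquenessb}}$, with the $n=1$ modification involving $\sigma_{1,-1}x_0$), verify sublinear growth from~(\ref{eq_liminf0}), and invoke Lemma~\ref{lemma_convex1}. The only cosmetic difference is that the paper substitutes the lower bound on $\kappa_n$ from~(\ref{eq_uniquenessb}) \emph{before} solving the quadratic, arriving directly at $x_n \le \sqrt{\ell_n/(2\sigma_n-\sigma_{n,0})}$, whereas you first bound $x_n \le M_n$ and then check that $M_n^2 \le \ell_n/(2\sigma_n-\sigma_{n,0})$ is equivalent to~(\ref{eq_uniquenessb}); the two computations are algebraically the same.
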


Before we prove Theorem~\ref{theorem_uniqueness1}, let us discuss a few special cases.

\begin{example}\label{example_uniqueness1}

In the extreme case when the left and right $\sigma$-coefficients in
{\rm(\ref{eq_cond3})} all vanish, we have $\NN_{\ref{eq_uniquenessa}} =
\NN$ {\rm{\&}} $\NN_{\ref{eq_uniquenessb}} = \emptyset$, and
{\rm(\ref{eq_curr_x_n})} takes the form
\begin{equation}  \notag 
	\ell_n = 
	\sigma_{n,0} \, x_n^2 + \kappa_n \, x_n ,
	\quad n\in\NN ,
\end{equation}
so that, clearly, for each $n\in\NN$, the quadratic equation has a unique
positive solution $x_n$.

\end{example}

\begin{example}\label{example_uniqueness2} 

If we allowed the middle $\sigma$-coefficients in {\rm(\ref{eq_curr_x_n})}
to vanish too, then we could end up with
\begin{equation} \notag 
	\ell_n = 
	\kappa_n \, x_n ,
	\quad n\in\NN ,
\end{equation}
that would not have a positive solution $\bX$ unless
$\(\kappa_n\)_{n\in\NN}$ is positive.

\end{example}

\begin{example}\label{example_uniqueness4}

Other examples showing the significance of the middle $\sigma$-coefficients
in {\rm(\ref{eq_curr_x_n})} are the following. If
\begin{equation} \notag
	1 = x_n (x_{n+1} + x_{n-1}) ,
	\quad n\in\NN ,
\end{equation}
then the substitution $y_n \DEF x_{n-1}x_{n}$ transforms it to
\begin{equation} \notag
	1 = y_{n+1} + y_{n} ,
	\quad n\in\NN ,
\end{equation}
so that $y_n = 1/2 + (-1)^n \CONST$, and if
\begin{equation} \notag
	n = x_n (x_{n+1} + x_{n-1}) ,
	\quad n\in\NN ,
\end{equation}
then the same substitution leads to $y_n = n/2 - 1/4 + (-1)^n
\CONST$.\footnote{These examples were suggested by one of the referees.}

\end{example}

\begin{example}\label{example_uniqueness3}

One can take $\sigma_{1,1}=\sqrt{2}$ {\rm \&} $\sigma_{1,0}=1$ {\rm \&}
$\sigma_{1,-1}=0$, and $\sigma_{n,1}= \sqrt{\frac{n}{n+1}}$ {\rm \&}
$\sigma_{n,0}=1$ {\rm \&} $\sigma_{n,-1} = \sqrt{\frac{n}{n-1}}$ for $n \ge
2$. Then $x_n=\sqrt{n}$ is a solution of
\begin{equation} \notag
	3 n = x_n (\sigma_{n,1} \, x_{n+1} + \sigma_{n,0} \, x_n + \sigma_{n,-1} \, x_{n-1}) ,
	\quad n\in\NN ,
\end{equation}
with $x_0=0$ and $x_1=1$. The solution for $x_0=0$ and $x_1=-1$ is $x_n
= -\sqrt{n}$. These are the only ``nice'' solutions, that is, monotone,
and either positive or negative. The expression $\sqrt{n}$ corresponds
to the asymptotic behavior whenever $\sigma_{n,1}$ and $\sigma_{n,-1}$
converge to $1$ as $n\to\infty$.

\end{example}

Combining Theorems \ref{theorem_existence1} \& \ref{theorem_uniqueness1}
and simplifying the conditions in the latter we get the following
corollary.

\begin{corollary}\label{corollary_uniqueness1}

Let $\sigma_n\le\sigma_{n,0}$ {\rm{\&}} $\kappa_n\ge 0$ for $n\in\NN$, let
\begin{equation} \label{eq_liminf1}
	\liminf_{n\to\infty} \frac{\ell_n}{ n^2 \, \sigma_{n,0}} = 0 \,,
\end{equation}
and assume that either $x_0 = 0$ or, at least, $x_0$ satisfies
$\sigma_{1,-1} x_0  \ge -\kappa_1 $, see {\rm(\ref{eq_cond3})} and
{\rm(\ref{eq_sigma})}. Then there exists a unique $x_1>0$ such that the
sequence $\bX$ satisfying equation {\rm(\ref{eq_curr_x_n})} is positive.

\end{corollary}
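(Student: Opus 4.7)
The plan is to verify that the cleaner hypotheses of the corollary imply both the conditions of Theorem~\ref{theorem_existence1} (for existence) and the more intricate conditions of Theorem~\ref{theorem_uniqueness1} (for uniqueness), so that the corollary follows at once. Since Theorem~\ref{theorem_existence1} already applies whenever the $\sigma$-coefficients are strictly positive, the substance lies entirely on the uniqueness side; I would first isolate the partition and then check the three ingredients of Theorem~\ref{theorem_uniqueness1} one at a time.

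First I would define the disjoint decomposition by declaring $n \in \NN_{\ref{eq_uniquenessa}}$ when $2\sigma_n \le \sigma_{n,0}$ and $n \in \NN_{\ref{eq_uniquenessb}}$ otherwise. On $\NN_{\ref{eq_uniquenessa}}$ condition \textup{(\ref{eq_uniquenessa})} is immediate. On $\NN_{\ref{eq_uniquenessb}}$ the hypothesis $\sigma_n \le \sigma_{n,0}$ combined with $2\sigma_n > \sigma_{n,0}$ supplies the first half of \textup{(\ref{eq_uniquenessb})}; for the second half I would observe that $\kappa_n \ge 0$ and $2\sigma_n - \sigma_{n,0} > 0$ force the right side of the inequality to be nonnegative, whereas the left side $-2(\sigma_{n,0} - \sigma_n)\sqrt{\ell_n}$ is manifestly nonpositive. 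Thus \textup{(\ref{eq_uniquenessb})} holds for free.

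Next I would reduce the $\liminf$ condition \textup{(\ref{eq_liminf0})} to the stated \textup{(\ref{eq_liminf1})}: since $\kappa_n \ge 0$, the negative part $\kappa_n^-$ is identically zero, so the second term inside the parentheses in \textup{(\ref{eq_liminf0})} vanishes and the condition collapses to \textup{(\ref{eq_liminf1})}. For the initial datum, if $1 \in \NN_{\ref{eq_uniquenessa}}$ no constraint on $x_0$ is required; if $1 \in \NN_{\ref{eq_uniquenessb}}$ I must verify \textup{(\ref{eq_uniquenessb1})}. In the case $x_0 = 0$ the quantity $\sigma_{1,-1}x_0 + \kappa_1$ equals $\kappa_1 \ge 0$; in the alternative case $\sigma_{1,-1}x_0 \ge -\kappa_1$ gives the same nonnegativity directly. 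Hence the right-hand side of \textup{(\ref{eq_uniquenessb1})} is nonnegative while the left-hand side is nonpositive, and the inequality holds.

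With all hypotheses of Theorem~\ref{theorem_uniqueness1} verified, that theorem yields at most one positive solution, while Theorem~\ref{theorem_existence1} supplies at least one; together they produce the unique $x_1 > 0$ claimed. The main obstacle is in fact purely bookkeeping: recognizing that the apparently elaborate second condition of \textup{(\ref{eq_uniquenessb})} is automatically satisfied once $\kappa_n \ge 0$ and $\sigma_n \le \sigma_{n,0}$, because the signs of the two sides split cleanly. There is no genuinely new analytic content beyond reusing the two previously established theorems.
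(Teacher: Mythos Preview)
Your proposal is correct and follows exactly the route the paper itself takes: the paper does not give a separate proof of the corollary but simply states that it follows by ``combining Theorems~\ref{theorem_existence1} \& \ref{theorem_uniqueness1} and simplifying the conditions in the latter,'' and your write-up carries out precisely that simplification, including the sign observation that makes the second inequality in (\ref{eq_uniquenessb}) automatic once $\kappa_n\ge 0$ and $\sigma_n\le\sigma_{n,0}$. Your caveat that Theorem~\ref{theorem_existence1} needs the strict positivity condition (\ref{eq_cond2}) on the off-diagonal $\sigma$-coefficients is a fair reading; the paper is slightly casual on this point.
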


\begin{proof}[Proof of Theorem~\ref{theorem_uniqueness1}]

Suppose (\ref{eq_curr_x_n}) has at least one positive solution.
Pick two, not necessarily distinct, positive solutions of
(\ref{eq_curr_x_n}), say, $\bU \DEF \(u_n\)_{n\in\NN}$ and $\bV \DEF
\(v_n\)_{n\in\NN}$. Denoting $\varepsilon_n \DEF u_n-v_n$ for
$n\in\NN\cup\{0\}$ and taking the difference of the corresponding
equations, we obtain
\begin{equation} \label{eq_diff01} 
	\(\sigma_{n,0}+\frac{\ell_n}{u_nv_n}\)\varepsilon_n =
	- \sigma_{n,1}\varepsilon_{n+1} - \sigma_{n,-1}\varepsilon_{n-1} ,
	\qquad n\in\NN ,
\end{equation}
so that, by {\rm(\ref{eq_sigma})},
\begin{equation} \label{eq_diff02} 
	\(\sigma_{n,0}+\frac{\ell_n}{u_nv_n}\) \lv \varepsilon_n \rv \le
	\sigma_n \( \lv \varepsilon_{n+1} \rv + \lv \varepsilon_{n-1} \rv \) ,
	\qquad n\in\NN .
\end{equation}

\textbf{Step 1.} Our first goal is to show that
\begin{equation} \label{eq_epsilon01} 
	2 \lv \varepsilon_n \rv \le \lv \varepsilon_{n+1} \rv + \lv \varepsilon_{n-1} \rv, 
	\qquad \forall n\in\NN .
\end{equation}
If $n \in \NN_{\ref{eq_uniquenessa}}$, then (\ref{eq_epsilon01}) holds
trivially by the theorem's assumption (\ref{eq_uniquenessa}) since all terms
in (\ref{eq_diff02}) are nonnegative.

If $n \in \NN_{\ref{eq_uniquenessb}}$, then we have to consider separately
when $n = 1$ and $n > 1$; the reason being that for $n = 1$ equation
{\rm(\ref{eq_curr_x_n})} includes the term $x_0$ that is not necessarily
nonnegative, and, therefore, $x_0$ can't be thrown away when estimating
$x_1$. 

When $n \in \NN_{\ref{eq_uniquenessb}}$ and $n = 1$, we obtain from
{\rm(\ref{eq_curr_x_n})} the inequality
\begin{equation} \label{eq_curr_x_1_1} \notag
	\ell_1 \ge  
	x_1 \( \sigma_{1,0} \, x_1 + \sigma_{1,-1} \, x_0 \) + \kappa_1 \, x_1 =
	 \sigma_{1,0} \, x_1^2 + \( \sigma_{1,-1} \, x_0 + \kappa_1 \) \, x_1 ,
\end{equation}
so that by (\ref{eq_uniquenessb1})
\begin{equation} \label{eq_curr_x_1_2} \notag
	\ell_1 \ge  
	\sigma_{1,0} \, x_1^2 - 
		\frac {
		2 \( \sigma_{1,0} - \sigma_1 \)  \sqrt{\ell_1}} 
		{\sqrt {2 \sigma_1 -\sigma_{1,0}}} \, x_1 .
\end{equation}
Therefore, $x_1$ must lie between the roots of 
\begin{equation} \label{eq_curr_x_1_3} \notag
	\sigma_{1,0} \, x^2 -
	\frac {2 \( \sigma_{1,0} - \sigma_1 \)  \sqrt{\ell_1}} {\sqrt {2 \sigma_1 -\sigma_{1,0}}} \, x - \ell_1 = 
	0 \, .
\end{equation}
Solving this quadratic equation, we obtain
\begin{equation} \label{eq_curr_x_1_4} 
	x_1 \le \frac {\sqrt{\ell_1}} { \sqrt {2 \sigma_1 -\sigma_{1,0}} } \, ,
	\qquad 1 \in \NN_{\ref{eq_uniquenessb}} .
\end{equation}
Therefore,
\begin{equation} \label{eq_curr_x_1_5} \notag
	 2 \sigma_1 -\sigma_{1,0} \le \frac {\ell_1} { u_1 v_1} \, ,
\end{equation}
which, together with (\ref{eq_diff02}), implies (\ref{eq_epsilon01}) for $n =
1 \in \NN_{\ref{eq_uniquenessb}}$.

When $n \in \NN_{\ref{eq_uniquenessb}}$ and $n > 1$, we proceed the same way with
a minor modification. Namely, we obtain from {\rm(\ref{eq_curr_x_n})} the
inequality
\begin{equation} \label{eq_curr_x_n_1} \notag
	\ell_n \ge  
	x_n \( \sigma_{n,0} \, x_n \) + \kappa_n \, x_n =
	 \sigma_{n,0} \, x_n^2 + \kappa_n \, x_n ,
\end{equation}
so that by the second inequality in (\ref{eq_uniquenessb})
\begin{equation} \label{eq_curr_x_1_6} \notag
	\ell_n \ge  
	\sigma_{1,0} \, x_n^2 - 
	\frac 
	{2 \( \sigma_{n,0} - \sigma_n \)  \sqrt{\ell_n}} 
	{\sqrt {2 \sigma_n -\sigma_{n,0}}} \, x_n ,
\end{equation}
and then the same ``largest root of the quadratic equation'' argument we find that
\begin{equation} \label{eq_curr_x_n_7} \notag
	x_n \le \frac {\sqrt{\ell_n}} { \sqrt {2 \sigma_n -\sigma_{n,0}} } \, ,
	\qquad 1 < n \in \NN_{\ref{eq_uniquenessb}} ,
\end{equation}
that is,
\begin{equation} \label{eq_curr_x_n_8} \notag
	 2 \sigma_n -\sigma_{n,0} \le \frac {\ell_n} { u_n v_n} ,
\end{equation}
which, together with (\ref{eq_diff02}), implies (\ref{eq_epsilon01}) for $n
\in \NN_{\ref{eq_uniquenessb}} \setminus \{1\}$ as well.

\textbf{Step 2.} Our next goal is to show that
\begin{equation} \label{eq_liminf04} 
	\liminf_{n\to\infty} \frac {\lv \varepsilon_n \rv} {n} = 0 .
\end{equation}

We will estimate ${\lv \varepsilon_n \rv}$ for all $n\in\NN\setminus\{1\}$ in one fell swoop.
Throwing away all nonnegative terms in (\ref{eq_curr_x_n}), we obtain
\begin{equation} \label{eq_estimate2} \notag
	\ell_n \ge \sigma_{n,0} \, x_n^2+ \kappa_n \, x_n ,
	\qquad n > 1,
\end{equation}
so that $x_n$ must lie between the roots of $\sigma_{n,0} \, x^2 +
\kappa_n x - \ell_n = 0$, and, therefore,
\begin{equation} \label{eq_estimate3} \notag
	\lv \varepsilon_n \rv \le
	u_n + v_n \le
	2 \times \sqrt{ \frac {\ell_n} {\sigma_{n,0}} }
	\qquad \mbox{{\rm if} } n > 1
	\; {\rm{\&}} \;
	\kappa_n \ge 0,
\end{equation}
and
\begin{equation} \label{eq_estimate4} \notag
	\lv \varepsilon_n \rv \le
	u_n + v_n \le
	\frac {\sqrt{\kappa_n^2 +4 \ell_n \, \sigma_{n,0}}-\kappa_n} {\sigma_{n,0}} \le
	2 \, \times \( \sqrt{ \frac {\ell_n} {\sigma_{n,0}} } - \frac {\kappa_n} {\sigma_{n,0}} \) ,
	\qquad \mbox{{\rm if} } n > 1
	\; {\rm{\&}} \;
	\kappa_n < 0,
\end{equation}
so that
\begin{equation} \label{eq_estimate5} \notag
	\varepsilon_n^2 \le
	8 \times \( \frac {\ell_n} {\sigma_{n,0}} + \frac {\(\kappa_n^-\)^2} {\sigma_{n,0}^2} \) ,
	\qquad \forall n > 1
	\; {\rm{\&}} \;
	\forall \kappa_n \in \RR ,
\end{equation}
and, in view of (\ref{eq_liminf0}), the limit relationship in (\ref{eq_liminf04})
follows.

Combining what was proved in steps 1 \& 2, that is, (\ref{eq_epsilon01}) and
(\ref{eq_liminf04}), we can use now Lemma~\ref{lemma_convex1}, applied with
$\omega_n \DEF \lv \varepsilon_n \rv$, to obtain immediately that the two
solutions $\bU$ and $\bV$ are, in fact, identical.
\end{proof}

\begin{note}

It would be interesting to see either examples or conditions when one
can produce precisely $Q$ different initial data $x_1>0$ yielding
positive solutions where $1<Q\in\NN$ is prescribed.

\end{note}

\begin{note}

Although Lew--Quarles' \cite[Theorem~4.3, p.~365]{lew1983} uses an
entirely different approach to uniqueness, it also assumes that
$\liminf_{n\to\infty} {\ell_n}/{ n^2} = 0$ that is essentially the same
as (\ref{eq_liminf1}). On the other hand, Lew--Quarles'
\cite[Theorem~6.3, p.~369]{lew1983} and Hajela's \cite[Theorem~2.2,
p.~210]{hajela1987} impose the condition $\lim_{n\to\infty}
{\ell_{n+1}}/{\ell_n} > 0$ that is of a totally different nature.

\end{note}

\section{Limits}
\label{sec_limits}

In this section, we investigate asymptotic behavior of (not necessarily
positive or negative) solutions of (\ref{eq_curr_x_n}). As before, we
always assume that $\(\ell_n\)_{n\in\NN} \in \RR^+$. For convenience, we
rewrite (\ref{eq_curr_x_n}), that is,
\begin{equation} \notag
	\ell_n = 
		x_n \( \sigma_{n,1} \, x_{n+1} + \sigma_{n,0} \, x_n + \sigma_{n,-1} \, x_{n-1} \) + \kappa_n \, x_n
\end{equation}
as
\begin{equation} \notag
	 1 = 
		\frac {x_n} {\sqrt{\ell_n}}
		\( 
		\sigma_{n,1} \, \sqrt {\frac {\ell_{n+1}} {\ell_n}} \, \frac {x_{n+1}} {\sqrt{\ell_{n+1}}} + 
		\sigma_{n,0} \, \frac {x_n} {\sqrt{\ell_n}} + 
		\sigma_{n,-1} \, \sqrt {\frac {\ell_{n-1}} {\ell_n}} \, \frac{x_{n-1}} {\sqrt{\ell_{n-1}}}
		\) +
		\frac {\kappa_n} {\sqrt{\ell_n}} \, \frac {x_n} {\sqrt{\ell_n}}
\end{equation}
or, introducing
\begin{equation} \label{x_n_2_t_n}
	t_n \DEF \frac {x_n} {\sqrt{\ell_n}} \, ,
\end{equation}
as
\begin{equation} \label{eq_curr_t_n}
	 1 = 
		t_n
		\( 
		\sigma_{n,1} \, \sqrt {\frac {\ell_{n+1}} {\ell_n}} \, t_{n+1} + 
		\sigma_{n,0} \, t_n + 
		\sigma_{n,-1} \, \sqrt {\frac {\ell_{n-1}} {\ell_n}} \, t_{n-1}
		\) +
		\frac {\kappa_n} {\sqrt{\ell_n}} \, t_n \, .
\end{equation}

We start with the following observation.

\begin{theorem} \label{theorem_bound1}

Let the coefficients of {\rm(\ref{eq_curr_x_n})} satisfy
\begin{equation} \notag
	\liminf_{n\to\infty} \sigma_{n,0}  > 0
	\quad \& \quad
	\limsup_{n\to\infty} \, \sigma_{n,\pm 1} \sqrt { \frac {\ell_{n \pm 1}} {\ell_n} } < \infty
	\quad \& \quad
	\limsup_{n\to\infty} \frac {\lv \kappa_n \rv} {\sqrt{\ell_n}} < \infty \, .
\end{equation}
If $\bX = \(x_n\)_{n\in\NN}$ satisfies {\rm(\ref{eq_curr_x_n})}, then
\begin{equation} \label{liminf_limsup_equiv}
	\liminf_{n\to\infty} \frac {x_n} {\sqrt{\ell_n}} > -\infty
	\quad \Longleftrightarrow \quad
	\limsup_{n\to\infty} \frac {x_n} {\sqrt{\ell_n}} < \infty \, .
\end{equation}

\end{theorem}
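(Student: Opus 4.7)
The plan is to work with the normalized variables $t_n\DEF x_n/\sqrt{\ell_n}$ from (\ref{x_n_2_t_n}), in terms of which (\ref{eq_curr_x_n}) becomes (\ref{eq_curr_t_n}). Observe first that $t_n\neq 0$ for every $n\in\NN$: otherwise $x_n=0$ would force $\ell_n=0$ in (\ref{eq_curr_x_n}), contradicting $\ell_n\in\RR^+$. Dividing (\ref{eq_curr_t_n}) by $t_n$ and isolating the off-diagonal terms yields the key identity
\begin{equation} \notag
	\sigma_{n,1}\,\sqrt{\frac{\ell_{n+1}}{\ell_n}}\,t_{n+1} + \sigma_{n,-1}\,\sqrt{\frac{\ell_{n-1}}{\ell_n}}\,t_{n-1}
	= \frac{1}{t_n} - \sigma_{n,0}\,t_n - \frac{\kappa_n}{\sqrt{\ell_n}}.
\end{equation}
Under the three hypotheses there exist $N_0\in\NN$ and constants $c,C,K\in\RR^+$ such that $\sigma_{n,0}\ge c$, both coefficients $\sigma_{n,\pm 1}\sqrt{\ell_{n\pm 1}/\ell_n}$ lie in $[0,C]$, and $|\kappa_n|/\sqrt{\ell_n}\le K$ for all $n\ge N_0$.

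The two directions of (\ref{liminf_limsup_equiv}) are symmetric, so I would argue just one of them by contradiction: assuming $\limsup_{n\to\infty} t_n=+\infty$, I would deduce $\liminf_{n\to\infty} t_n=-\infty$. Choose a subsequence $n_k\to\infty$ with $t_{n_k}\to+\infty$. The right-hand side of the displayed identity at $n=n_k$ is then bounded above by $1/t_{n_k}-c\,t_{n_k}+K$, which tends to $-\infty$; hence the sum of the two left-hand-side terms also tends to $-\infty$. Passing to a further subsequence along which a single fixed one of those two summands diverges to $-\infty$, and invoking the elementary fact that $a_k b_k\to-\infty$ with $0\le a_k\le C$ forces $b_k\to-\infty$, I would conclude that either $t_{n_k+1}\to -\infty$ or $t_{n_k-1}\to -\infty$, whence $\liminf_{n\to\infty} t_n=-\infty$. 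The opposite direction of (\ref{liminf_limsup_equiv}) is handled identically after swapping $+\infty$ and $-\infty$ throughout.

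I do not anticipate any serious obstacle. The only mildly delicate point is that, in regime (\ref{eq_cond3}), some coefficients $\sigma_{n,\pm 1}\sqrt{\ell_{n\pm 1}/\ell_n}$ may vanish, which is precisely why the elementary fact above is needed instead of just dividing through by those coefficients.
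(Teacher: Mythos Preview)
Your argument is correct, but it follows a genuinely different route from the paper's. The paper proves the forward implication directly: assuming a uniform lower bound $t_n>K$, it restricts to the indices where $t_n\ge 0$, replaces the neighbors $t_{n\pm1}$ in (\ref{eq_curr_t_n}) by $K$, and obtains a quadratic inequality in $t_n$ alone whose larger root is a uniform upper bound; the reverse implication is reduced to the forward one by the substitution $x_n\mapsto -x_n$. You instead argue the contrapositive via subsequences: from $t_{n_k}\to+\infty$ the identity forces the weighted sum of neighbors to $-\infty$, and the coefficient bound $0\le a_k\le C$ then pushes one neighbor to $-\infty$. Both proofs are short; the paper's has the minor advantage of producing an explicit bound (the root of a quadratic with controlled coefficients), while yours dispenses with the quadratic formula altogether and makes the symmetry between the two directions of (\ref{liminf_limsup_equiv}) more immediately visible, including the harmless case in (\ref{eq_cond3}) where a coefficient $\sigma_{n,\pm1}$ may vanish.
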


\begin{corollary} \label{theorem_bound2}

Let the coefficients of {\rm(\ref{eq_curr_x_n})} satisfy
\begin{equation} \notag
	\limsup_{n\to\infty} \sigma_{n,1}  < \infty
	\quad \& \quad
	\liminf_{n\to\infty} \sigma_{n,0}  > 0
	\quad \& \quad
	\limsup_{n\to\infty} \sigma_{n,-1}  < \infty
\end{equation}
and
\begin{equation} \notag
	0 < 
	\liminf_{n\to\infty} \sqrt { \frac {\ell_{n+1}} {\ell_n} } \le 
	\limsup_{n\to\infty} \sqrt { \frac {\ell_{n+1}} {\ell_n} } <
	\infty
	\quad \& \quad
	\limsup_{n\to\infty} \frac {\lv \kappa_n \rv} {\sqrt{\ell_n}} < \infty .
\end{equation}
If $\bX = \(x_n\)_{n\in\NN}$ satisfies {\rm(\ref{eq_curr_x_n})}, then
{\rm(\ref{liminf_limsup_equiv})} holds.

\end{corollary}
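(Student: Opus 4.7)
The plan is to observe that Corollary~\ref{theorem_bound2} is essentially a packaging of Theorem~\ref{theorem_bound1}: the hypotheses are formulated in a form that is often easier to check in practice (decoupling the $\sigma$-coefficients from the ratios of the $\ell$-sequence), and the job is simply to verify that they imply the three conditions required by Theorem~\ref{theorem_bound1}, after which the conclusion~(\ref{liminf_limsup_equiv}) is inherited verbatim.

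First I would dispose of the two conditions that carry over without any work. The hypothesis $\liminf_{n\to\infty}\sigma_{n,0}>0$ is stated identically in both results, and likewise $\limsup_{n\to\infty}|\kappa_n|/\sqrt{\ell_n}<\infty$. So only the two mixed quantities
\begin{equation} \notag
	\sigma_{n,1}\sqrt{\tfrac{\ell_{n+1}}{\ell_n}}
	\quad\text{and}\quad
	\sigma_{n,-1}\sqrt{\tfrac{\ell_{n-1}}{\ell_n}}
\end{equation}
need to be shown to have finite $\limsup$. The ``plus'' case is immediate: since both $\limsup_{n\to\infty}\sigma_{n,1}<\infty$ and $\limsup_{n\to\infty}\sqrt{\ell_{n+1}/\ell_n}<\infty$ by assumption, and both sequences are nonnegative, the product satisfies $\limsup_{n\to\infty}\sigma_{n,1}\sqrt{\ell_{n+1}/\ell_n}<\infty$.

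The ``minus'' case is the only spot that takes a line of thought, though it is not really an obstacle. I would rewrite
\begin{equation} \notag
	\sqrt{\tfrac{\ell_{n-1}}{\ell_n}}
	=
	\frac{1}{\sqrt{\ell_n/\ell_{n-1}}}
\end{equation}
and note that, after shifting the index $n\mapsto n-1$, the assumption $\liminf_{n\to\infty}\sqrt{\ell_{n+1}/\ell_n}>0$ gives $\liminf_{n\to\infty}\sqrt{\ell_n/\ell_{n-1}}>0$. Taking reciprocals converts this $\liminf$ into a finite $\limsup$, so $\limsup_{n\to\infty}\sqrt{\ell_{n-1}/\ell_n}<\infty$. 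Combined with $\limsup_{n\to\infty}\sigma_{n,-1}<\infty$, this yields $\limsup_{n\to\infty}\sigma_{n,-1}\sqrt{\ell_{n-1}/\ell_n}<\infty$.

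With all three hypotheses of Theorem~\ref{theorem_bound1} verified, that theorem applies and the equivalence~(\ref{liminf_limsup_equiv}) follows. The only mildly delicate point — the reciprocal trick for the $\sigma_{n,-1}$ term — is precisely the reason the two-sided bound $0 < \liminf\sqrt{\ell_{n+1}/\ell_n} \le \limsup\sqrt{\ell_{n+1}/\ell_n} < \infty$ is stated in the corollary, as opposed to just a one-sided $\limsup$ hypothesis.
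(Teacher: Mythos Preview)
Your proposal is correct and is exactly what the paper intends: the corollary is stated without proof as an immediate consequence of Theorem~\ref{theorem_bound1}, and your reduction---carrying over the two identical hypotheses and bounding the two mixed products $\sigma_{n,\pm 1}\sqrt{\ell_{n\pm 1}/\ell_n}$ using the nonnegativity of the factors and the reciprocal trick for the ``minus'' case---is precisely the verification the paper leaves implicit.
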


\begin{note}

Of course, we could have stated (\ref{liminf_limsup_equiv}) as
\begin{equation} \notag
	\liminf_{n\to\infty} \frac {x_n} {\sqrt{\ell_n}} > -\infty
	\quad \Longleftrightarrow \quad
	\limsup_{n\to\infty} \frac {x_n} {\sqrt{\ell_n}} < \infty
	\quad \Longleftrightarrow \quad
	\limsup_{n\to\infty} \frac {\lv x_n \rv } {\sqrt{\ell_n}} < \infty \, .
\end{equation}

\end{note}

\begin{example}

If, for instance,  $\ell_n \DEF 1$ \&  $\sigma_{n,1} \DEF 1/\sqrt {n}$
when $n$ is even and $\ell_ n \DEF n$ \& $\sigma_{n,1} \DEF \sqrt {n}$
when $n$ is odd, $\sigma_{n,-1} \DEF 1/\sigma_{n,1}$, and $\sigma_{n,0}
\DEF 1$ \& $\kappa_n \DEF 1$, then the coefficient conditions in
Theorem~\ref{theorem_bound1} are satisfied whereas those in
Corollary~\ref{theorem_bound2} are not.

\end{example}

\begin{proof}[Proof of Theorem~\ref{theorem_bound1}]

First, we will prove $\Longrightarrow$ in (\ref{liminf_limsup_equiv}).
Let $\NN_* \DEF \{n \in \NN : x_n \ge 0 \}$. Clearly, it is sufficient
to estimate $x_n/\sqrt{\ell_n}$ from above only when $n\in\NN_*$. Let $K
\in \RR$ be such that
\begin{equation} \notag
	t_n = \frac {x_n} {\sqrt{\ell_n}} > K , \qquad n \in \NN .
\end{equation}
Then, keeping in mind that $t_n \ge 0$ for $n\in\NN_*$, equation
(\ref{eq_curr_t_n}) implies
\begin{equation} \notag
	1 \ge
	K \, \sigma_{n,1} \, \sqrt {\frac {\ell_{n+1}} {\ell_n}}\, t_n +
	\sigma_{n,0} \, t_n^2 +
	K \sigma_{n,-1} \, \sqrt {\frac {\ell_{n-1}} {\ell_n}} \, t_n +
	\frac {\kappa_n} {\sqrt{\ell_n}} \, t_n \, ,
	\qquad n\in\NN_* ,
\end{equation}
that is, the quadratic polynomial 
\begin{equation} \notag
	\sigma_{n,0} \, t^2 +
	\(
	K \, \sigma_{n,1} \, \sqrt {\frac {\ell_{n+1}} {\ell_n}} +
	K \sigma_{n,-1} \, \sqrt {\frac {\ell_{n-1}} {\ell_n}} +
	\frac {\kappa_n} {\sqrt{\ell_n}}
	\)  \, t - 1 \, ,
	\qquad n\in\NN_* ,
\end{equation}
with positive leading coefficient is nonpositive at $t = t_n$ so that
$t_n$ is at most as big as its largest zero is, that is,
\begin{equation} \notag
	t_n \le \frac {-B_n + \sqrt {B_n^2 + 4 \, A_n}} {2 \, A_n}
 	\, ,
	\qquad n\in\NN_* ,
\end{equation}
where
\begin{equation} \notag
	A_n \DEF \sigma_{n,0}
	\quad \& \quad
	B_n \DEF 
		K \, \sigma_{n,1} \, \sqrt {\frac {\ell_{n+1}} {\ell_n}} +
		K \sigma_{n,-1} \, \sqrt {\frac {\ell_{n-1}} {\ell_n}} +
		\frac {\kappa_n} {\sqrt{\ell_n}} \, .
\end{equation}
Hence, by the conditions imposed on the coefficients of (\ref{eq_curr_x_n}),
$\limsup_{\stackrel {n\to\infty} {n \in \NN_*}} t_n < \infty$ that proves
$\Longrightarrow$ in (\ref{liminf_limsup_equiv}).

The reverse implication $\Longleftarrow$ in (\ref{liminf_limsup_equiv})
follows from the $\Longrightarrow$ case by replacing $\(x_n\)_{n\in\NN}$
by $\(-x_n\)_{n\in\NN}$ in (\ref{eq_curr_x_n}) that leads to a sign
change for $\kappa_n$.
\end{proof}

If we expect solutions of (\ref{eq_curr_x_n}) to behave well as
$n\to\infty$ then it is natural to assume that so do the coefficients.
This is expressed in the following statement.

\begin{theorem} \label{theorem_asy1}

Let the coefficients of {\rm(\ref{eq_curr_x_n})} be such that the
following four limits satisfy
\begin{equation} \label{coeff_limit1}
	\sigma_{0} \DEF \lim_{n\to\infty} \sigma_{n,0}  > 0
	\quad \& \quad
	p_{\pm 1} \DEF \lim_{n\to\infty} \, \sigma_{n,\pm 1} \sqrt { \frac {\ell_{n \pm 1}} {\ell_n} } \in \RR
	\quad \& \quad
	q \DEF \lim_{n\to\infty} \frac {\kappa_n} {\sqrt{\ell_n}} \in \RR \, .
\end{equation}
If (the not necessarily positive) $\bX = \(x_n\)_{n\in\NN}$ satisfies
{\rm(\ref{eq_curr_x_n})} and
\begin{equation} \label{cond_liminf2}
	\liminf_{n\to\infty} \frac {x_n} {\sqrt{\ell_n}} \ge 0 ,
\end{equation}
then $\lim_{n\to\infty} {x_n} / {\sqrt{\ell_n}}$ exists and
\begin{equation} \label{x_n_lim1}
	\lim_{n\to\infty} \frac {x_n} {\sqrt{\ell_n}} = 
	\frac 
		{-q + \sqrt{q^2 + 4 \( p_{1} + \sigma_0 + p_{-1} \)}} 
		{2 \( p_{1} + \sigma_0 + p_{-1} \)}
	 \, ,
\end{equation}
and if (the not necessarily negative) $\bX = \(x_n\)_{n\in\NN}$
satisfies {\rm(\ref{eq_curr_x_n})} and
\begin{equation} \label{cond_limsup2}
	\limsup_{n\to\infty} \frac {x_n} {\sqrt{\ell_n}} \le 0 ,
\end{equation}
then again $\lim_{n\to\infty} {x_n} / {\sqrt{\ell_n}}$ exists and
\begin{equation} \label{x_n_lim2}
	\lim_{n\to\infty} \frac {x_n} {\sqrt{\ell_n}} = 
	- \frac 
		{q + \sqrt{q^2 + 4 \( p_{1} + \sigma_0 + p_{-1} \)}} 
		{2 \( p_{1} + \sigma_0 + p_{-1} \)}
	 \, .
\end{equation}

\end{theorem}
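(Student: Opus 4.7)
The plan is to normalize by $t_n \DEF x_n/\sqrt{\ell_n}$, so that (\ref{eq_curr_x_n}) becomes (\ref{eq_curr_t_n}), and then to study $\alpha \DEF \liminf_{n\to\infty} t_n$ and $\beta \DEF \limsup_{n\to\infty} t_n$. The hypothesis (\ref{cond_liminf2}) gives $\alpha \ge 0$, while Theorem~\ref{theorem_bound1}---whose boundedness hypotheses all follow from the limits (\ref{coeff_limit1})---delivers $\beta < \infty$. The first preliminary step will be to upgrade $\alpha \ge 0$ to $\alpha > 0$: along any subsequence with $t_{m_k} \to 0$ the right-hand side of (\ref{eq_curr_t_n}) at $n = m_k$ tends to $0$ (the bracket is bounded, the outer factor $t_{m_k}$ and the additive $(\kappa_{m_k}/\sqrt{\ell_{m_k}})t_{m_k}$ both vanish), contradicting the left-hand side $1$. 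Hence $t_n > 0$ for all sufficiently large $n$.

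With $\alpha > 0$ secured, I perform the Freud-type \emph{Kunstgriff} alluded to in the introduction. Along a subsequence $n_k$ with $t_{n_k} \to \beta$, the double bound $\alpha - \epsilon \le t_{n_k \pm 1} \le \beta + \epsilon$ holds eventually for every $\epsilon > 0$; multiplying by the nonnegative quantities $\sigma_{n_k, \pm 1}\sqrt{\ell_{n_k \pm 1}/\ell_{n_k}} \to p_{\pm 1}$ and by $t_{n_k} > 0$, and then letting $k \to \infty$ and $\epsilon \to 0$ in (\ref{eq_curr_t_n}) via (\ref{coeff_limit1}), yields the sandwich
\begin{equation}\notag
\sigma_0 \beta^2 + q\beta + (p_1 + p_{-1})\alpha\beta \;\le\; 1 \;\le\; (p_1 + \sigma_0 + p_{-1})\beta^2 + q\beta .
\end{equation}
The right piece reads $P(\beta) \ge 0$ with $P(t) \DEF (p_1 + \sigma_0 + p_{-1})t^2 + qt - 1$, so $\beta \ge L$, where $L$ is the positive root of $P$ (precisely the number on the right of (\ref{x_n_lim1})). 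Running the same recipe on a subsequence $m_k$ with $t_{m_k} \to \alpha$ (using $t_{m_k} > 0$, guaranteed by $\alpha > 0$) gives the companion
\begin{equation}\notag
 (p_1 + \sigma_0 + p_{-1})\alpha^2 + q\alpha \;\le\; 1 \;\le\; \sigma_0 \alpha^2 + q\alpha + (p_1+p_{-1})\alpha\beta ,
\end{equation}
so $P(\alpha) \le 0$ and $\alpha \le L$.

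Subtracting the two ``tight'' inequalities (those carrying the cross term $(p_1+p_{-1})\alpha\beta$, which then cancels) produces $(\beta - \alpha)[\sigma_0(\alpha+\beta) + q] \le 0$; dividing each tight inequality through by its outer factor ($\beta$ and $\alpha$ respectively) and subtracting produces in addition $(\beta - \alpha)[\sigma_0 - (p_1+p_{-1}) + 1/(\alpha\beta)] \le 0$. Either of these immediately forces $\alpha = \beta$ whenever its bracket is strictly positive: the first is automatic when $q \ge 0$ (since $\sigma_0 > 0$ and $\alpha,\beta \ge 0$), and the second is automatic when $p_1 + p_{-1} \le \sigma_0$. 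In either regime the conclusion $\alpha = \beta$ combines with $P(\alpha) = P(\beta) = 0$ to pin the common value to $L$, proving (\ref{x_n_lim1}).

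The main obstacle is the residual regime $q < 0$ and $p_1+p_{-1} > \sigma_0$, where both brackets above may vanish simultaneously---this is precisely the configuration in which the limiting autonomous recurrence admits a spurious $2$-periodic orbit---and excluding $\alpha < L < \beta$ there requires pressing the Kunstgriff further by applying (\ref{eq_curr_t_n}) at the shifted indices $n_k \pm 1$ and $m_k \pm 1$ and exploiting that the neighbor sublimits of the extremal subsequences are themselves tied to $\alpha$ and $\beta$ via the difference equation. The second conclusion of the theorem---the case $\limsup t_n \le 0$ with limit (\ref{x_n_lim2})---then follows by the sign change $x_n \rightsquigarrow -x_n$, $\kappa_n \rightsquigarrow -\kappa_n$, which leaves (\ref{eq_curr_x_n}) form-invariant, converts (\ref{cond_limsup2}) into (\ref{cond_liminf2}) for the new sequence, and flips the sign of $q$ in (\ref{coeff_limit1}), thereby turning (\ref{x_n_lim1}) into (\ref{x_n_lim2}).
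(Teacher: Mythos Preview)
Your approach is the same Freud \emph{Kunstgriff} the paper uses: normalize to $t_n = x_n/\sqrt{\ell_n}$, invoke Theorem~\ref{theorem_bound1} for the upper bound, take subsequences realizing $\alpha=\liminf t_n$ and $\beta=\limsup t_n$, and pass to the limit in (\ref{eq_curr_t_n}). Your two ``tight'' inequalities are exactly the pair the paper derives,
\[
1 \le \ell\,(p_1 L + \sigma_0\ell + p_{-1}L) + q\ell,
\qquad
1 \ge L\,(p_1\ell + \sigma_0 L + p_{-1}\ell) + qL
\]
(with $\ell=\alpha$, $L=\beta$). One minor technical difference: rather than first proving $\alpha>0$, the paper introduces an auxiliary null sequence $(\varepsilon_n)$ with $\varepsilon_n\ge 0$ and $t_n+\varepsilon_n\ge 0$, rewrites (\ref{eq_curr_t_n}) in terms of $t_n+\varepsilon_n$ plus a remainder $R_n\to 0$, and thereby makes every factor in the product nonnegative without a preliminary positivity step. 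Your $\alpha>0$ shortcut is correct and leads to the same inequalities, so this difference is cosmetic.

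The genuine gap is the one you flag yourself: in the ``residual regime'' $q<0$ and $p_1+p_{-1}>\sigma_0$ you only outline a plan (``pressing the Kunstgriff further by applying (\ref{eq_curr_t_n}) at the shifted indices'') and do not carry it out, so as written your argument is incomplete. The paper, by contrast, performs no case split whatsoever: after subtracting the two tight inequalities to reach $\sigma_0 L^2 + qL \le \sigma_0\ell^2 + q\ell$ (equivalently your $(\beta-\alpha)[\sigma_0(\alpha+\beta)+q]\le 0$), it simply writes ``that is, $L\le\ell$'' and moves on to evaluate the limit. Your second derived inequality $(\beta-\alpha)[\sigma_0-(p_1+p_{-1})+1/(\alpha\beta)]\le 0$ does not appear in the paper at all. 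Thus the discrepancy between your proposal and the paper's proof is concentrated at that single deduction: the paper treats it as immediate, while you treat it as an obstacle and leave it unresolved. To match the paper you should drop the case analysis and argue directly that $\sigma_0 L^2+qL\le\sigma_0\ell^2+q\ell$ forces $L\le\ell$; if you believe that step needs more justification in the residual regime, then you must actually supply the extra argument rather than merely announce it.
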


\begin{corollary} \label{theorem_asy2}

Let the coefficients of {\rm(\ref{eq_curr_x_n})} be such that the
following five limits satisfy
\begin{equation} \notag
	\sigma_{1} \DEF \lim_{n\to\infty} \sigma_{n,1} \ge 0
	\quad \& \quad
	\sigma_{0} \DEF \lim_{n\to\infty} \sigma_{n,0} > 0
	\quad \& \quad
	\sigma_{-1} \DEF \lim_{n\to\infty} \sigma_{n,-1} \ge 0
\end{equation}
and
\begin{equation} \notag
	p \DEF \lim_{n\to\infty} \sqrt { \frac {\ell_{n+1}} {\ell_n} } > 0
	\quad \& \quad
	q \DEF \lim_{n\to\infty} \frac {\kappa_n} {\sqrt{\ell_n}} \in \RR \, ,
\end{equation}
where all five limits are finite. If (the not necessarily positive) $\bX
= \(x_n\)_{n\in\NN}$ satisfies {\rm(\ref{eq_curr_x_n})} and
{\rm(\ref{cond_liminf2})}, then $\lim_{n\to\infty} {x_n} /
{\sqrt{\ell_n}}$ exists and
\begin{equation} \notag
	\lim_{n\to\infty} \frac {x_n} {\sqrt{\ell_n}} = 
	\frac 
		{-q + \sqrt{q^2 + 4 \( \sigma_{1} \, p + \sigma_0 + \sigma_{-1} \, p^{-1} \)}} 
		{2 \( \sigma_{1} \, p + \sigma_0 + \sigma_{-1} \, p^{-1} \)}
	 \, ,
\end{equation}
and if (the not necessarily negative) $\bX = \(x_n\)_{n\in\NN}$
satisfies {\rm(\ref{eq_curr_x_n})} and {\rm(\ref{cond_liminf2})}, then
again $\lim_{n\to\infty} {x_n} / {\sqrt{\ell_n}}$ exists and
\begin{equation} \notag
	\lim_{n\to\infty} \frac {x_n} {\sqrt{\ell_n}} = 
	- \frac 
		{q + \sqrt{q^2 + 4 \( \sigma_{1} \, p + \sigma_0 + \sigma_{-1} \, p^{-1} \)}} 
		{2 \( \sigma_{1} \, p + \sigma_0 + \sigma_{-1} \, p^{-1} \)}
	\, .
\end{equation}

\end{corollary}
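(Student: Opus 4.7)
The plan is to derive the corollary as a direct consequence of Theorem~\ref{theorem_asy1}, by checking that the five limit hypotheses of the corollary force the four limit hypotheses of the theorem and by identifying the resulting values of $p_{\pm 1}$.

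First, I would note that the hypothesis $\sigma_0 = \lim_{n\to\infty} \sigma_{n,0} > 0$ and the hypothesis $q = \lim_{n\to\infty} \kappa_n/\sqrt{\ell_n} \in \RR$ are literally identical in the two statements, so two of the four limits in (\ref{coeff_limit1}) are given for free. Next, since $\sigma_{n,1} \to \sigma_1$ and $\sqrt{\ell_{n+1}/\ell_n} \to p$ with both limits finite, the product rule for limits gives
\begin{equation} \notag
p_{1} = \lim_{n\to\infty} \sigma_{n,1}\sqrt{\tfrac{\ell_{n+1}}{\ell_n}} = \sigma_1 \, p .
\end{equation}
The only mildly delicate point is the $p_{-1}$ limit: by shifting the index $n \mapsto n-1$ in the hypothesis on $p$, we get $\sqrt{\ell_n/\ell_{n-1}} \to p$, and because $p > 0$ by assumption we may invert and obtain $\sqrt{\ell_{n-1}/\ell_n} \to p^{-1}$. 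Combining this with $\sigma_{n,-1} \to \sigma_{-1}$ yields
\begin{equation} \notag
p_{-1} = \lim_{n\to\infty} \sigma_{n,-1}\sqrt{\tfrac{\ell_{n-1}}{\ell_n}} = \sigma_{-1} \, p^{-1} .
\end{equation}
Thus all four limits in (\ref{coeff_limit1}) exist and are finite.

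Having verified the hypotheses of Theorem~\ref{theorem_asy1}, I would simply invoke that theorem. Under condition (\ref{cond_liminf2}), formula (\ref{x_n_lim1}) applies, and substituting $p_1 = \sigma_1 p$ and $p_{-1} = \sigma_{-1} p^{-1}$ into it produces exactly the first displayed limit of the corollary. Analogously, under condition (\ref{cond_limsup2}), formula (\ref{x_n_lim2}) gives the second displayed limit after the same substitution. There is no genuine obstacle: the whole proof is a one-line reduction, the only step requiring care being the justification of $\sqrt{\ell_{n-1}/\ell_n} \to p^{-1}$, which uses $p > 0$ to legitimize passing to the reciprocal.
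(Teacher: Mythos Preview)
Your proposal is correct and is exactly the approach the paper intends: the corollary is stated without a separate proof, as an immediate specialization of Theorem~\ref{theorem_asy1}, and your verification that the five limits yield $p_{1}=\sigma_{1}\,p$ and $p_{-1}=\sigma_{-1}\,p^{-1}$ (using $p>0$ to justify the reciprocal) is the only computation needed. Note incidentally that the second half of the corollary as printed cites (\ref{cond_liminf2}) where (\ref{cond_limsup2}) is clearly meant, in parallel with Theorem~\ref{theorem_asy1}; you correctly invoked (\ref{cond_limsup2}).
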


\begin{proof}[Proof of Theorem~\ref{theorem_asy1}]

First we deal with the case (\ref{cond_liminf2}). We will use the
``$t_n$'' notation, see (\ref{x_n_2_t_n}). By
Theorem~\ref{theorem_bound1},
\begin{equation} \notag
	0 \le 
	\ell \DEF \liminf_{n\to\infty} t_n
	\le 
	L \DEF \limsup_{n\to\infty} t_n
	< \infty \, ,
\end{equation}
and our goal is to show that $L \le \ell$. Once this is done, then
evaluating $\lim_{n\to\infty} t_n$ is straightforward. Our approach is
based on the \emph{Freud Kunstgriff\/}.

Use (\ref{cond_liminf2}) to find a nonincreasing sequence of nonnegative
numbers $\(\varepsilon_n\)_{n\in\NN}$ such that\footnote{If $\bX$ is
nonnegative, then just set $\varepsilon_n = 0$ for $n\in\NN$.}
\begin{equation} \notag
	\lim_{n\to\infty} \varepsilon_n = 0 
	\quad \& \quad
	t_n + \varepsilon_n \ge 0 \, , \quad \forall n\in\NN .
\end{equation}
We rewrite equation (\ref{eq_curr_t_n}) as
\begin{align} \label{eq_curr_t_n2}
	& \phantom{\hskip 2.00 cm} 
	1 =
		\( t_n + \varepsilon_n \) \times
	\\
		&\( 
		\sigma_{n,1} \, \sqrt {\frac {\ell_{n+1}} {\ell_n}} \, \( t_{n+1} + \varepsilon_{n+1} \) + 
		\sigma_{n,0} \, \( t_n + \varepsilon_n \) + 
		\sigma_{n,-1} \, \sqrt {\frac {\ell_{n-1}} {\ell_n}} \, \( t_{n-1} + \varepsilon_{n-1} \)
		\) + \notag
	\\
		&\phantom{\hskip 2.00 cm}
		\frac {\kappa_n} {\sqrt{\ell_n}} \, \( t_n + \varepsilon_n \) +
		R_n
		\, , \notag
\end{align}
where
\begin{align} \notag
	& \phantom{\hskip 2.00 cm} 
	R_n \DEF 
	-t_n \(
	\sigma_{n,1} \, \sqrt {\frac {\ell_{n+1}} {\ell_n}} \, \varepsilon_{n+1} +
	\sigma_{n,0} \, \varepsilon_n +
	\sigma_{n,-1} \, \sqrt {\frac {\ell_{n-1}} {\ell_n}} \, \varepsilon_{n-1}
	\) -
	\\
	& \varepsilon_n \( 
	\sigma_{n,1} \, \sqrt {\frac {\ell_{n+1}} {\ell_n}} \, \( t_{n+1} + \varepsilon_{n+1} \) + 
	\sigma_{n,0} \, \( t_n + \varepsilon_n \) + 
	\sigma_{n,-1} \, \sqrt {\frac {\ell_{n-1}} {\ell_n}} \, \( t_{n-1} + \varepsilon_{n-1} \)
	\) + \notag
	\\
	& \phantom{\hskip 2.00 cm} 
	-\frac {\kappa_n} {\sqrt{\ell_n}} \, \varepsilon_n \, . \notag
\end{align}
Clearly,
\begin{equation} \notag
	\lim_{n\to\infty} R_n = 0
\end{equation}
because each $\varepsilon$-term in it goes to $0$ as $n\to\infty$ and
every coefficient of every $\varepsilon$-term in it is $\cO(1)$.
The advantage of (\ref{eq_curr_t_n2}) is that, except for $R_n$, every
$t+\varepsilon$-term is nonnegative in it and that every coefficient of
every such $t+\varepsilon$-term is both nonnegative and convergent.

Now we are in the position to use the \emph{Freud Kunstgriff\/}.
Pick $\NN_\ell \subset \NN$ and $\NN_L \subset \NN$ such that
\begin{equation} \notag
	\lim_{\stackrel {n\to\infty} {n \in \NN_\ell}} t_n = \ell
	\qquad \& \qquad
	\lim_{\stackrel {n\to\infty} {n \in \NN_L}} t_n = L ,
\end{equation}
and let $n\to\infty$ first over $\NN_\ell$ and then over $\NN_L$ in
(\ref{eq_curr_t_n2}). We get
\begin{equation} \notag
	1 \le \ell \( p_{1} \, L + \sigma_0 \, \ell + p_{-1} \, L \) + q \, \ell
\end{equation}
and 
\begin{equation} \notag
	1 \ge L \( p_{1} \, \ell + \sigma_0 \, L + p_{-1} \, \ell \) + q \, L ,
\end{equation}
respectively, from which
\begin{equation} \notag
	L \( p_{1} \, \ell + \sigma_0 \, L + p_{-1} \, \ell \) + q \, L
	\le 
	\ell \( p_{1} \, L + \sigma_0 \, \ell + p_{-1} \, L \) + q \, \ell \, ,
\end{equation}
that is,  $L \le \ell$ so that $\ell = L$. Once we know that $T \DEF
\lim_{n\to\infty} t_n$ exists, we just let $n\to\infty$ either in
(\ref{eq_curr_t_n2}) or in (\ref{eq_curr_t_n}) to obtain
\begin{equation} \notag
	1 = T \( p_{1} \, T + \sigma_0 \, T + p_{-1} \, T \) + q \, T
\end{equation}
and the positive solution of
\begin{equation} \notag
	\( p_{1} + \sigma_0 + p_{-1} \, \) T^2 + q \, T - 1 = 0
\end{equation}
yields (\ref{x_n_lim1}).

If, instead of (\ref{cond_liminf2}), condition (\ref{cond_limsup2}) holds,
then, as observed in the proof of Theorem~\ref{theorem_bound1},
replacing $\(x_n\)_{n\in\NN}$ by $\(-x_n\)_{n\in\NN}$ in
(\ref{eq_curr_x_n}) leads to a sign change for $\kappa_n$ and that
results in a sign change of $q$ in (\ref{coeff_limit1}) so that 
(\ref{x_n_lim2}) follows from (\ref{x_n_lim1}).
\end{proof}

\begin{note}

It remains to be seen if conditions (\ref{cond_liminf2})
and (\ref{cond_limsup2}) in Theorem~\ref{theorem_asy1} can be replaced
by a one-sided $\cO(1)$ condition similarly as it is done in
Theorem~\ref{theorem_bound1}.

\end{note}

\section{Acknowledgments}
\label{sec_ack}


\textrm{We thank Vilmos Totik and those participants of the 2003 Schweitzer
competition whose solutions of Problem~\#6 we had the privilege to study and
to adopt, especially P\'eter Varj\'u, see the details at the end of
\S\ref{sec_intro}. We also thank the referees whose suggestions helped to
improve the presentation.}



\end{document}